\documentclass[a4paper,UKenglish]{lipics-v2019}

\usepackage[utf8]{inputenc}

\usepackage{parskip}
\usepackage{amsfonts}
\usepackage{amsmath,amssymb}
\usepackage{mathtools}
\usepackage{amsthm}

\usepackage{thmtools} 
\usepackage{thm-restate}  

\usepackage{microtype}

\usepackage{xcolor,pgf,tikz}
\usetikzlibrary{arrows,automata,shapes,decorations,calc,positioning}

\usepackage{xspace}

\usepackage{macros} 
\usepackage{enumitem}

\theoremstyle{definition}

\newtheorem*{defi}{Definition}

\newtheorem*{prop}{Proposition}

\DeclareMathOperator{\Log}{Log}

\setlist{nosep}


\bibliographystyle{plain}

\title{How Fast Can You Escape a Compact Polytope?}

\author{Julian D'Costa}{Indian Institute of Science, Bangalore, India \and
Max Planck Institute for Software Systems,
  Saarland Informatics Campus, Germany}{julianrdcosta@gmail.com}{}{}

\author{Engel Lefaucheux}{Max Planck Institute for Software Systems, Saarland Informatics Campus, Germany}{elefauch@mpi-sws.org}{}{}

\author{Jo\"el Ouaknine}{Max Planck Institute for Software Systems,
  Saarland Informatics Campus, Germany \and Department of Computer
  Science, University of Oxford, UK}%
{joel@mpi-sws.org}{}{ERC grant AVS-ISS (648701) and DFG grant
389792660 as part of TRR 248 (see
https://perspicuous-computing.science).}

\author{James Worrell}{Department of Computer
  Science, University of Oxford, UK}{jbw@cs.ox.ac.uk}{}{EPSRC
  Fellowship EP/N008197/1.}

\authorrunning{J. D'Costa, E. Lefaucheux, J. Ouaknine, and J. Worrell} 

\Copyright{Julian D'Costa, Engel
  Lefaucheux, Jo\"el Ouaknine, James Worrell}


\ccsdesc[100]{Theory of computation → Timed and hybrid models}

\keywords{Continuous linear dynamical systems}


\nolinenumbers

\begin{document}

\maketitle

\begin{abstract}
  The Continuous Polytope Escape Problem (CPEP) asks whether every trajectory of a
  linear differential equation initialised within a convex polytope
 eventually escapes the polytope. We provide a polynomial-time algorithm to decide CPEP
  for compact polytopes. We also establish a quantitative uniform
  upper bound
  on the time required for every trajectory to escape the
  given polytope. In addition, we establish iteration bounds for termination
  of discrete linear loops via reduction to the continuous case.


\end{abstract}
\newpage

\section{Introduction}

In ambient space $\Reals^{d}$, a \emph{continuous linear
  dynamical system} is a trajectory $\myvector{x}(t)$, where $t$
ranges over the non-negative reals, defined by a differential equation
$\dot{\myvector{x}}(t)=f(\myvector{x}(t))$ in which the function
$f$ is \emph{affine} or \emph{linear}. If the initial point
$\myvector{x}(0)$ is given, the differential equation uniquely
defines the entire trajectory. (Linear) dynamical systems have been
extensively studied in Mathematics, Physics, and Engineering, and more
recently have played an increasingly important role in Computer
Science, notably in the modelling and analysis of cyber-physical
systems; two recent and authoritative textbooks on the subject
are~\cite{Alu15,Pla18}.

In the study of dynamical systems, particularly from the perspective
of control theory, considerable attention has been given to the study
of \emph{invariant sets}, \emph{i.e.}, subsets of $\Reals^{d}$ from which
no trajectory can escape; see, \emph{e.g.},
\cite{CastelanH92,BlondelT00,BM07,SDI08}. Our focus in the present
paper is on sets with the dual property that \emph{no trajectory
  remains trapped}. Such sets play a key role in analysing
\emph{liveness} properties in cyber-physical systems (see, for
instance,~\cite{Alu15}): discrete progress is ensured by
guaranteeing that all trajectories (\emph{i.e.}, from any initial starting
point) must eventually reach a point at which they `escape'
(temporarily or permanently) the set in question, thereby forcing a
discrete transition to take place.

More precisely, given an affine function
$f:\Reals^{d}\rightarrow \Reals^{d}$ and a convex polytope
$\mathcal{P}\subseteq\Reals^{d}$, both specified using rational
coefficients encoded in binary, we consider the \emph{Continuous Polytope
  Escape Problem (CPEP)} which asks whether, for all starting points
$\myvector{x}_0$ in $\mathcal{P}$, the corresponding
trajectory of the solution to the differential equation
\begin{equation*}
\begin{displaystyle} \begin{cases}
\dot{\myvector{x}}(t)=f(\myvector{x}(t)) \\
\myvector{x}(0)=\myvector{x}_{0}
\end{cases} \end{displaystyle}
\end{equation*}
eventually escapes $\mathcal{P}$.\footnote{By ``escaping''
  $\mathcal{P}$, we simply mean venturing outside of $\mathcal{P}$---we
  are unconcerned whether the trajectory might re-enter
  $\mathcal{P}$ at a later time or not.} 

CPEP was shown to be decidable in~\cite{OuakninePW17}, in which an
algorithm having complexity between $\NP$ and $\PSPACE$ was
exhibited. It is worth noting that, when the polytope $\mathcal{P}$ is
unbounded in space, the time taken for a given trajectory to escape
may be unboundedly large. For example, consider the unbounded one-dimensional polytope $\poly =
\{x\in \R \mid x \geq 1\}$ and differential equation
$\dot{{x}}(t)=-{x}(t)$.  For any starting point ${x}_0$, the
trajectory ${x}(t)= e^{-t} {x}_0$ converges to 0 and thus all trajectories 
eventually escape. However, the escape time is at least $\log(x_0)$ and
hence is not bounded over all initial points in $\poly$.  Even if the
polytope is bounded, there still need not be a uniform bound on the
escape time. For example, consider the polytope $\poly=(0,1]$ and the
  equation $\dot{{x}}(t)={x}(t)$.  Given an initial point ${x}_0$, the
  trajectory ${x}(t)= e^t {x}_0$ necessarily escapes $\poly$: but the
  escape time is at least $\log(1/x_0)$, which again is not bounded over
  $\poly$.  

  \textbf{Main contributions.} We show that, for \emph{compact}
  (\emph{i.e.}, closed and bounded) polytopes, CPEP is decidable in
  polynomial time.  Moreover, we show how to calculate uniform
  escape-time upper bounds; these bounds are exponential in the bit
  size of the descriptions of the differential equation and of the
  polytope, and doubly exponential in the ambient dimension.  In the
  case of differential equations specified by invertible or
  diagonalisable matrices, we have singly exponential bounds.

  In comparing the above with the results from~\cite{OuakninePW17}, we
  note both a substantial improvement in complexity (from $\PSPACE$ to
  $\PTIME$) as well as the production of explicit uniform bounds on
  escape times. It is worth pointing out that the mathematical
  approach pursued in~\cite{OuakninePW17} is non-effective, and
  therefore does not appear capable of yielding any quantitative
  escape-time bounds. The new constructive techniques used in the
  present paper, which originate mainly from linear algebra and
  algebraic number theory, are applicable owing to the fact that we
  focus our attention on \emph{compact} polytopes. In practice, of
  course, this is usually not a burdensome restriction; in most
  cyber-physical systems applications, for instance, all relevant
  polytopes will be compact (see, \emph{e.g.},~\cite{Alu15}).

 Another interesting observation is that the seemingly closely related
 question of whether a given single trajectory of a linear dynamical
 system escapes a compact polytope appears to be vastly more
 challenging and is not known to be decidable; see, in
 particular,~\cite{ContinuousSkolem,COW16b:LICS16,ContinuousSkolem3}. However,
 whether a given trajectory eventually hits a given single point is
 known as the \emph{Continuous Orbit Problem} and can be decided in
 polynomial time~\cite{Hainry08}.

 Finally, we also consider in the present paper a discrete analogue of
 CPEP for discrete-time linear dynamical systems, namely the
 \emph{Discrete Polytope Escape Problem (DPEP)}. This consists in
 deciding, given an affine function
$f:\Reals^{d}\rightarrow \Reals^{d}$ and a convex polytope
$\poly\subseteq\Reals^{d}$, whether for all initial points 
$\myvector{x}_{0} \in \poly$, the sequence
$(\myvector{x}_n)_{n\in \N}$ defined by the initial point and the
recurrence $\myvector{x}_{n+1} = f(\myvector{x}_n)$ eventually escapes
$\poly$. This problem---phrased as ``termination of linear programs''
over the reals and the rationals respectively---was already studied
and shown decidable in the seminal
papers~\cite{Braverman2006,Tiw04}, albeit with no complexity bounds
nor upper bounds on the number of iterations required to escape.
By leveraging our results on CPEP, we are able to show that, for
\emph{compact} polytopes, DPEP is decidable in polynomial time, and
moreover we derive upper bounds on the number of iterations that are
singly exponential in the bit size of the problem description and
doubly exponential in the ambient dimension.

\section{Preliminaries}
	
\subsection{The Continuous Polytope Escape Problem}

As noted in the previous section,	
the Continuous Polytope Escape Problem (CPEP) for continuous linear dynamical systems
consists in deciding, given an affine function
$f:\Reals^{d}\rightarrow \Reals^{d}$ and a convex polytope
$\poly\subseteq\Reals^{d}$, whether there exists an initial point
$\myvector{x}_{0} \in \poly$ for which the trajectory of the unique
solution of the differential equation
$\dot{\myvector{x}}(t)=f(\myvector{x}(t)),\myvector{x}(0)=\myvector{x}_{0},
t\geq 0$,
is entirely contained in $\poly$. For $T\in \R\cup \{\infty\}$, we denote by 
$X(T)$ the set $\{\vec{x}(t) \mid t \in \mathbb{R}_{\geq 0}, t \leq T\}$.
A starting point $\myvector{x}_{0}\in\poly$ is said to be a
\emph{fixed point} if for all $t\geq 0$,
$\myvector{x}(t)=\myvector{x}_0$, and it is \emph{trapped} if the
trajectory of $\myvector{x}(t)$ is contained in $\poly$ (\emph{i.e.},
$X(\infty)\subseteq \poly$); thus solving the CPEP amounts to deciding whether there is a trapped
point.

We will represent a $d$-dimensional instance of the CPEP by a triple
$(A,B,\myvector{c})$, where $A\in\R^{d\times d}$ represents the linear
function $f_A: \myvector{x} \mapsto A\myvector{x}$
\footnote{We remark that by increasing the dimension by one, the
  general CPEP can be reduced to the homogeneous case, in which the
  function $f$ is linear.}  and $B\in\R^{n\times d}$,
$\myvector{c}\in\R^{n}$ represent the polytope $\poly_{B,\myvector{c}} =
\{\myvector{x}\in \R^d \mid B\myvector{x}\leq \myvector{c}\}$.
Given such an instance and an initial point $\myvector{x}_0$, the
solution of the 
differential equation is $\myvector{x}(t) = \exp(At)\myvector{x}_0 \in \R^d$.
For the computation of bounds, we assume that
all the coefficients of $A$, $B$ and $\myvector{c}$ are rational and encoded in binary. 
The decidability results and escape bounds computed in this paper can be adapted to the case of 
algebraic coefficients, but we don't pursue this here.


	

Decidability of the CPEP was shown in~\cite{OuakninePW17}.  In this
paper we are interested in the following problem: given a positive
instance of CPEP (\emph{i.e.}, one in which every trajectory escapes),
compute an upper bound on the time to escape that holds uniformly over
all initial points in the polytope.  In other words, we wish to
compute $T\in \R_{\geq 0}$ such that for all points $\myvector{x}_{0}
\in \poly$ there exists $t_0\in \R$ such that $t_0\leq T$ and
$\myvector{x}(t_0)\not \in \poly$.  We call such a $T$ an
\emph{escape-time bound}.

As noted in the Introduction,
such an escape-time bound need not exist in general. In the remainder
of this paper, we therefore
restrict our attention to \emph{compact} 
polytopes.

   \subsection{Jordan Normal Forms}
   

   Let $A \in \mathbb{Q}^{d \times d}$ be a square matrix with rational
   entries.  The \emph{minimal polynomial} of $A$ is the unique monic
   polynomial $m(x) \in \mathbb{Q}[x]$ of least degree such that
   $m(A)=0$.  By the Cayley-Hamilton Theorem, the degree of $m$ is at most
   the dimension of $A$. The set $\sigma(A)$ of eigenvalues of $A$ is the set of
   roots of $m$.  The \emph{index} of an eigenvalue $\lambda$, denoted
   by $\nu(\lambda)$, is defined as its multiplicity as a root of $m$.
%

For each eigenvalue $\lambda$ of $A$ we denote by
$\mathcal{V}_{\lambda}$ the subspace of $\Complex^d$ spanned by the
set of generalised eigenvectors associated with $\lambda$. We also
denote by $\mathcal{V}^{r}$ the subspace of $\Complex^d$ spanned by
the set of generalised eigenvectors associated with some real
eigenvalue; we likewise denote by $\mathcal{V}^c$ the subspace of
$\Complex^d$ spanned by the set of generalised eigenvectors associated
with some non-real eigenvalue.

   
   
   It is well known that each vector $\myvector{v}\in\Complex^{d}$
   can be written uniquely as
   $\myvector{v}=\displaystyle{
   	\sum\limits_{\lambda\in\sigma(A)}\myvector{v}_{\lambda}}$,
   where $\myvector{v}_{\lambda}\in\mathcal{V}_{\lambda}$.
   It follows that $\myvector{v}$ can also be uniquely written as
   $\myvector{v}=\myvector{v}^{r}+\myvector{v}^{c}$, where
   $\myvector{v}^{r} \in\mathcal{V}^{r}$ and
   $\myvector{v}^{c} \in\mathcal{V}^{c}$.
   Moreover, we can write any matrix $A$ as $A=Q^{-1}JQ$ for some
   invertible matrix $Q$ and block diagonal Jordan matrix
   $J=\diag{(J_{1},\ldots,J_{N})}$, with each block $J_{i}$, associated to 
   the eigenvalue $\lambda_i$ having the following form:
   \begin{equation*}
   \begin{pmatrix}
   \lambda_i	&&	1		&&	0		&&	\cdots	&&	0		\\
   0		&&	\lambda_i	&&	1		&&	\cdots	&&	0		\\
   \vdots	&&	\vdots	&&	\vdots	&&	\ddots	&&	\vdots	\\
   0		&&	0		&&	0		&&	\cdots	&&	1		\\
   0		&&	0		&&	0		&&	\cdots	&&	\lambda_i	\\
   \end{pmatrix} \, .
   \end{equation*}   

   Given a rational matrix $A$, its Jordan Normal Form $J=QAQ^{-1}$ can be
   computed in polynomial time, as shown in \cite{Cai94}.
   Note that each vector $\myvector{v}$ appearing as a column of the
   matrix $Q^{-1}$ is a generalised eigenvector. We also note that the
   index $\nu(\lambda)$ of some eigenvalue $\lambda$ corresponds to the
   dimension of the largest Jordan block associated with it.
   Given $J_{i}$, a Jordan block of size $k$ associated with some eigenvalue $\lambda$,
   the closed-form expression for its exponential is
   \noindent
   \begin{equation*}
   \exp(J_{i}t)=\exp(\lambda t) \begin{pmatrix}
   1		&&	t		&&	\cdots	&&	\frac{t^{k-1}}{(k-1)!}	\\
   0		&&	1		&&	\cdots	&&	\frac{t^{k-2}}{(k-2)!}	\\
   \vdots	&&	\vdots	&&	\ddots	&&	\vdots						\\
   0		&&	0		&&	\cdots	&&	t							\\
   0		&&	0		&&	\cdots	&&	1							\\
   \end{pmatrix} \, .
   \end{equation*}
   
   
   Using this, for all $j\leq d$, the closed form of the $j$-th component of
   a trajectory is,
   $x^{(j)}(t) = \sum_{\lambda\in \sigma(A)} p_\lambda(t) 
   \exp(\lambda t)$ 
   where for all $\lambda\in \sigma(A)$, $p_\lambda$ is a polynomial of degree at 
   most $\nu(\lambda)-1$.
	
	
		
%
%


\subsection{The Discrete Polytope Escape Problem}

We shall also consider the Discrete Polytope Escape Problem (DPEP)\@.
The DPEP consists in deciding, given an affine function
$f:\Reals^{d}\rightarrow \Reals^{d}$ and a convex polytope
$\poly\subseteq\Reals^{d}$, whether there exists an initial point
$\myvector{x}_{0} \in \poly$ for which the sequence
$(\myvector{x}_n)_{n\in \N}$ defined by the initial point and the
recurrence $\myvector{x}_{n+1} = f(\myvector{x}_n)$ is entirely
contained in $\poly$. The definitions of fixed and trapped points are
immediately transposed to the discrete setting by considering the
sequence instead of the trajectory.
	
As with the CPEP, a $d$-dimensional instance of the DPEP is
represented by a triple $(A,B,\myvector{c})$, where $A\in\R^{d\times
  d}$ represents the function $f_A:\myvector{x} \in \R^d \mapsto
A\myvector{x}\in \R^d$ and $B\in\R^{n\times d}$ and
$\myvector{c}\in\R^{n}$ represent the polytope $\poly_{B,\myvector{c}}
= \{\myvector{x}\in \R^d \mid B\myvector{x}\leq \myvector{c}\}$.
Using the Jordan Normal form, one can see that the general form of the
$j$-th component of the sequence $(\vec{x}_n)_{n\in\N}$ is
$\myvector{x}^{(j)}_n =\sum_{\lambda\in \sigma(A)} p_\lambda(n)
\lambda^n$, where for all $\lambda\in \sigma(A)$, $p_\lambda$ is a
polynomial of degree at most $\nu(\lambda)-1$. We assume that all the coefficients of $A$, $B$ and $\myvector{c}$ are
rational.

The examples showing one cannot build a bound when the
polytope is open or unbounded for the CPEP can easily be carried over
to the DPEP. Thus, when considering the DPEP, we also only consider
compact polytopes.

\section{Deciding the Polytope Escape Problem for Compact Polytopes}
	
While the result of~\cite{OuakninePW17} allows us to decide the
existence of a trapped point for continuous linear dynamical systems,
the method is quite involved. When restricting ourselves to compact
polytopes, however, we can use the following proposition, which shows
that the existence of a trapped point is equivalent to the existence
of a fixed point.  
	
\begin{theorem}
\label{th:fixtrapC}
Given a CPEP instance $(A,B,\myvector{c})$, the polytope $\poly_{B,\myvector{c}}$ 
contains a trapped point iff it contains a fixed point.
\end{theorem}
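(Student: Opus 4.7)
The easier direction is immediate: if $\vec{y}$ is a fixed point then $\vec{x}(t)=\vec{y}\in\poly$ for all $t\geq 0$, so $\vec{y}$ is trapped. For the converse, I would use the reduction in the paper's footnote to assume $f$ is linear, i.e. $\dot{\vec{x}}(t)=A\vec{x}(t)$, in which case a fixed point of the dynamics is exactly a vector $\vec{y}\in\poly$ with $A\vec{y}=\vec{0}$.

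The plan is a Krylov--Bogolyubov-style time-averaging argument, which works beautifully because the polytope is compact (hence both bounded and closed). Given a trapped point $\vec{x}_0$, let $\vec{x}(t)=\exp(At)\vec{x}_0$ and define for each $T>0$
\[
\vec{y}_T \;:=\; \frac{1}{T}\int_0^T \vec{x}(t)\,dt.
\]
Since $B\vec{x}(t)\leq \vec{c}$ for every $t\geq 0$, integrating componentwise gives $B\vec{y}_T\leq \vec{c}$, so $\vec{y}_T\in\poly_{B,\vec{c}}$. A direct computation using the fundamental theorem of calculus yields
\[
A\vec{y}_T \;=\; \frac{1}{T}\int_0^T A\vec{x}(t)\,dt \;=\; \frac{1}{T}\int_0^T \dot{\vec{x}}(t)\,dt \;=\; \frac{\vec{x}(T)-\vec{x}(0)}{T}.
\]
Because $\poly_{B,\vec{c}}$ is bounded, the numerator stays uniformly bounded (by the diameter of $\poly_{B,\vec{c}}$), and therefore $A\vec{y}_T\to\vec{0}$ as $T\to\infty$.

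Finally, by compactness of $\poly_{B,\vec{c}}$, pick a sequence $T_n\to\infty$ along which $\vec{y}_{T_n}$ converges to some $\vec{y}^*$. Closedness of $\poly_{B,\vec{c}}$ gives $\vec{y}^*\in\poly_{B,\vec{c}}$, and continuity of $\vec{y}\mapsto A\vec{y}$ together with $A\vec{y}_{T_n}\to\vec{0}$ yields $A\vec{y}^*=\vec{0}$. Hence $\vec{y}^*$ is the desired fixed point.

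The genuinely delicate ingredient is the combination of the two compactness features: convexity plus linear constraints is what makes the integral mean lie in $\poly_{B,\vec{c}}$, while closedness plus boundedness is what lets us pass to a limit and stay in the polytope. Neither ingredient alone suffices, which is precisely why the unbounded and the open examples from the introduction (e.g.\ $(0,1]$ under $\dot x=x$) defeat the statement: in those cases time averages either diverge or escape the set in the limit. I do not expect any further obstacle beyond checking these two points carefully.
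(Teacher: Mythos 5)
Your proof is correct, and it follows a genuinely different route from the paper's. The paper also defines an auxiliary compact convex set $H$ (the closed convex hull of the full trajectory) and then invokes Brouwer's fixed-point theorem on the dyadic discretizations $s_n(\myvector{x}) = e^{A 2^{-n}}\myvector{x}$ of the flow, each of which maps $H$ into itself; the resulting nested closed non-empty sets $F_n$ of $s_n$-fixed points have non-empty intersection by compactness, and any point in that intersection is a fixed point of the vector field. Your argument instead uses the Krylov--Bogolyubov time-averaging device: the Ces\`aro means $\myvector{y}_T = \frac{1}{T}\int_0^T \myvector{x}(t)\,dt$ remain in $\poly_{B,\myvector{c}}$ because the defining inequalities are linear, $A\myvector{y}_T = (\myvector{x}(T)-\myvector{x}_0)/T \to \myvector{0}$ because the trajectory is confined to a bounded set, and any subsequential limit of $\myvector{y}_{T_n}$ is a fixed point by closedness and continuity. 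The two proofs use compactness at different junctures --- the paper for the existence of $s_n$-fixed points via Brouwer and for non-emptiness of the nested intersection, you for the Bolzano--Weierstrass extraction --- but your version is more elementary in that it avoids Brouwer's theorem entirely, replacing a topological fixed-point argument with an essentially calculus-level computation. It is also slightly more streamlined in that it works directly with the polytope rather than first constructing the auxiliary set $H$. Both approaches correctly identify boundedness and closedness as the two places where compactness is indispensable, matching the counterexamples given in the paper's introduction.
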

\begin{proof}
For the ``if'' direction, observe that
a fixed point $\myvector{x}_0\in \poly_{B,\myvector{c}}$ is
necessarily trapped.

Conversely, assume that there exists a trapped point
$\myvector{x}_0\in \poly_{B,\myvector{c}}$.  Let $H$ be the closure of
the convex hull of $X(\infty)=\{ \myvector{x}(t) \mid t\in \R_{\geq 0}\}$.
Then $H$ is convex, compact, and is contained in
$\poly_{B,\myvector{c}}$.  For each $n\in \N$ we define a function
$s_n:H\rightarrow H$ by
$s_n(\myvector{x})=e^{A2^{-n}}\myvector{x}$.  Note that this function is
well-defined: clearly $X(\infty)$ is invariant under $s_n$; moreover, since
$s_n$ is linear, the convex hull of $X(\infty)$ is also invariant
under $s_n$; finally, since $s_n$ is continuous, the closure of the
convex hull of $X(\infty)$ (i.e., $H$) is invariant under $s_n$.



For all $n\in \N$, as the function $s_n$ is continuous, by Brouwer's
fixed-point theorem $s_n$  admits at least one fixed point on $H$.  Let
$F_n$ be the non-empty set of fixed points of $s_n$ in $H$. 
Since $s_n = s_{n+1}\circ s_{n+1}$ we have that $F_{n+1} \subseteq F_{n}$
for all $n\in\N$.  Moreover, by continuity of the function $f_A$, $F_n$ is a closed set
for all $n\in\N$.
Therefore, the intersection $F_\infty=\bigcap_{n\in\N} F_n$ is
non-empty.  By continuity of $f_A$, any point $\myvector{y}\in
F_\infty$ satisfies $f_A(\myvector{y})=\myvector{0}$. Therefore, the CPEP
instance admits at least one fixed point within
$\poly_{B,\myvector{c}}$, which concludes the proof.
\end{proof}

Since the set $F = \{\vec{x} \mid A\vec{x} = \vec{0}\}$ of fixed points
is easy to calculate, we simply need to check whether its intersection
with the polytope is empty in order to decide CPEP.  Since the latter
can be formulated as a linear program, we can decide CPEP for compact
polytopes in polynomial time.

The proof of Theorem~\ref{th:fixtrapC} carries over with very small changes
(considering the function $f_A$ directly, instead of the family
$(s_n)_{n\in\N}$) to prove an analogous result for DPEP: 
\begin{theorem}
Given a DPEP instance $(A,B,\myvector{c})$, $\poly_{B,\myvector{c}}$ 
has a trapped point iff it contains a fixed point.
\end{theorem}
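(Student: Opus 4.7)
The plan is to reuse the proof scheme of Theorem~\ref{th:fixtrapC}, adapted to discrete time. The key simplification is that in the discrete case one does not need the auxiliary family $(s_n)_{n\in\N}$ of time-$2^{-n}$ maps: a Brouwer fixed point of $f_A$ itself is already a fixed point in the dynamical sense. The ``if'' direction is immediate, since a fixed point $\vec{x}_0\in\poly_{B,\vec{c}}$ satisfies $\vec{x}_n=\vec{x}_0$ for all $n$ and is therefore trapped.

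For the converse, I would assume $\vec{x}_0\in\poly_{B,\vec{c}}$ is trapped and let $H$ denote the closure of the convex hull of the orbit $X(\infty)=\{\vec{x}_n\mid n\in\N\}$. Exactly as in the continuous proof, $H$ is convex, compact, and contained in $\poly_{B,\vec{c}}$ (using that $\poly_{B,\vec{c}}$ is itself closed and convex). Invariance of $H$ under $f_A$ then follows from a three-step argument mirroring the one used for the $s_n$: the orbit $X(\infty)$ is invariant under $f_A$ by the recurrence $\vec{x}_{n+1}=f_A(\vec{x}_n)$; because $f_A$ is affine, this invariance extends to the convex hull; because $f_A$ is continuous, it extends further to the closure $H$. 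Applying Brouwer's fixed-point theorem to the continuous self-map $f_A:H\to H$ on the convex compact set $H$ yields a point $\vec{y}\in H\subseteq\poly_{B,\vec{c}}$ with $f_A(\vec{y})=\vec{y}$, which is a fixed point of the DPEP instance.

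The only conceptual difference from Theorem~\ref{th:fixtrapC} is the absence of the descending intersection $\bigcap_n F_n$: in the continuous setting a Brouwer fixed point of $s_n=\exp(A\cdot 2^{-n})$ is merely a point returning to itself after time $2^{-n}$, so one must intersect over all $n$ and invoke continuity of $f_A$ to upgrade it to a genuine equilibrium $f_A(\vec{y})=\vec{0}$. In discrete time the recurrence already supplies the natural map whose Brouwer fixed points coincide with dynamical fixed points, so the limiting step is unnecessary. I do not anticipate any real obstacle; the work is essentially bookkeeping, and the sole point worth emphasising is that it is $f_A$ itself (rather than any exponential or iterate thereof) that should be fed to Brouwer.
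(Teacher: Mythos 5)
Your proposal is correct and matches the paper's intended argument exactly: the paper itself states that the proof of Theorem~\ref{th:fixtrapC} ``carries over with very small changes (considering the function $f_A$ directly, instead of the family $(s_n)_{n\in\N}$),'' which is precisely the substitution you make, and you correctly note that Brouwer fixed points of $f_A$ already coincide with dynamical fixed points of the recurrence, so the descending intersection $\bigcap_n F_n$ becomes unnecessary.
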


\section{Bounding the Escape Time for a Positive CPEP Instance}
\label{sec:bound-continuous}	
	
The goal of this section is to establish a uniform bound on the escape time of a
positive CPEP instance. The main result is as follows:

\begin{theorem}
\label{th:globfin}
Given a $d$-dimensional positive instance of the CPEP, described by a tuple of bit size $b$, the time to escape the polytope is bounded by
$$T = 4 \exp\left( 640 b d^{4d+10}\right)= e^{bd^{O(d)}} .$$
\end{theorem}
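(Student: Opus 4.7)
The plan is to reduce the escape-time bound to a quantitative analysis of exponential polynomials along each facet of $\poly$, and to convert the absence of trapped points (guaranteed by Theorem~\ref{th:fixtrapC}) together with compactness of $\poly$ into a uniform bound on the first positive crossing of such a polynomial.

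First, using the Jordan decomposition $A = Q^{-1}JQ$ (computable in polynomial time, as noted in the preliminaries), I would express, for each row $\vec{b}_i$ of $B$ defining a facet $\vec{b}_i^\top \vec{x} \le c_i$ of $\poly$, the value along the trajectory as
\begin{equation*}
f_i(\vec{x}_0, t) \;=\; \vec{b}_i^\top \exp(At)\vec{x}_0 - c_i \;=\; \sum_{\lambda \in \sigma(A)} q_{i,\lambda}(t)\, e^{\lambda t} - c_i,
\end{equation*}
where each $q_{i,\lambda}$ is a polynomial of degree at most $\nu(\lambda)-1 \le d$ whose coefficients depend linearly on $\vec{x}_0$, with bit-sizes controlled by $b$ and by the entries of $Q$. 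The escape time of the trajectory from $\vec{x}_0$ is $\min_i \inf\{t > 0 : f_i(\vec{x}_0, t) > 0\}$.

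Next, by Theorem~\ref{th:fixtrapC}, positivity of the instance implies that no $\vec{x}_0 \in \poly$ is trapped, so for every $\vec{x}_0$ there is at least one facet $i$ for which $f_i(\vec{x}_0, \cdot)$ is not identically $\le 0$ on $\R_{\ge 0}$. I would sort the spectrum $\sigma(A)$ by decreasing real part and, for each $\vec{x}_0$, isolate the \emph{dominant stratum}---the eigenvalues of largest real part whose coefficients in $\vec{x}_0$ are non-vanishing. Standard estimates (root-separation bounds for algebraic numbers, together with height bounds on the $q_{i,\lambda}$) then yield an explicit time after which either the dominant stratum has driven some $f_i$ strictly positive (so the trajectory has escaped), or the dominant stratum is uniformly dwarfed by the next lower stratum; in the latter case, one iterates the argument on the residual trajectory. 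Since there are at most $d$ distinct real parts, this peeling procedure terminates in at most $d$ stages, accumulating the claimed bound.

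The hardest part, and the source of the doubly-exponential factor $d^{O(d)}$, is ensuring that the bound is \emph{uniform} over the compact polytope $\poly$: for initial points whose coefficients in the current dominant stratum are tiny, the time at which that stratum overtakes the subdominant remainder grows as those coefficients shrink. The key input here is that compactness of $\poly$ combined with the absence of fixed points---equivalently, $A\vec{x}_0 \neq \vec{0}$ throughout $\poly$---gives a strictly positive infimum on the norms of the relevant projections of $\vec{x}_0$ onto the generalised eigenspaces governing escape at each stage. Converting this infimum into an explicit lower bound in $b$ and $d$, for instance via linear-programming reasoning over the vertices of $\poly$ together with Cramer-type size estimates, is what produces the exponent $d^{4d+10}$, as each of the up-to-$d$ peeling stages contributes a polynomial-in-$d$ factor that is then exponentiated when combined with the height bounds on the $q_{i,\lambda}$.
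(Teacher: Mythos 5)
Your setup is fine (Jordan form, per--facet exponential polynomials, root--separation and height bounds), but the heart of the argument---the ``peeling'' by dominant real part---has a genuine gap, and it is exactly the pitfall the paper flags before Subsection~\ref{sec:hypercube}: expressions of the form $\vec{b}_i^{\top}\exp(At)\vec{x}_0 - c_i$ mix contributions from many eigenvalues, and disentangling them directly is where the difficulty lies.

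The specific place where your argument breaks is the claim that compactness plus absence of fixed points gives ``a strictly positive infimum on the norms of the relevant projections of $\vec{x}_0$ onto the generalised eigenspaces governing escape at each stage.'' Absence of fixed points in $\poly$ only yields $\inf_{\vec{x}_0\in\poly}\|A\vec{x}_0\| > 0$; it does \emph{not} give a positive lower bound on the projection onto any particular eigenspace. For instance, take $A=\mathrm{diag}(2,1)$ and $\poly$ the segment from $(0,1)$ to $(1,1)$: there is no fixed point, yet the projection onto the dominant ($\lambda=2$) eigenspace vanishes at the endpoint $(0,1)$ and is arbitrarily small nearby. So the time at which the dominant stratum ``drives some $f_i$ positive'' is genuinely unbounded over $\poly$, and you cannot conclude by your first stage alone. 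Your ``or the dominant stratum is uniformly dwarfed'' branch is meant to handle this, but it is not well-defined: as $t$ grows the dominant stratum only becomes \emph{more} dominant, so ``iterating on the residual trajectory'' has to quantify a window of time in which the subdominant stratum both forces escape \emph{and} is not yet swamped by the (small but eventually dominant) top stratum, uniformly over $\vec{x}_0$. Making that precise is a non-trivial stability-to-perturbation argument, and you have not supplied it. A further gap: when the dominant real part is shared by complex-conjugate eigenvalues, the dominant contribution to $f_i$ is oscillatory, and no amount of waiting makes its sign stable. The paper sidesteps both problems at once by (i) passing to the Jordan basis and enclosing $\poly$ between an outer hypercube $[-C,C]^d$ and an inner $\varepsilon$-exclusion around the fixed set $F$ (Lemma~\ref{lem:hypsize}), which decouples the eigenvalues into independent per-block conditions; and (ii) first discharging all complex eigenvalues via Lemma~\ref{lem:generzero}, which shows that after time $T_c$ the convex hull of the trajectory contains a point lying entirely in the real eigenspace, exploiting the convexity of $\poly$. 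Your proposal would need analogues of both of these steps---some surrogate for the $C/\varepsilon$ decoupling, and a way to neutralise oscillation---before it could be completed.
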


We prove this bound in four steps. First, in
Subsection~\ref{sec:complex-eiger}, we show that one can ignore the component of the initial vector lying in the
complex eigenspace $\mathcal{V}^c$ after a certain amount of time.  Intuitively speaking, this stems
from the fact that a convex polytope that contains a spiral must
contain the centre of that spiral. Thus whenever we have a complex
eigenvalue we can ignore the effects of the rotation by focusing on
the axis of the helix formed by the trajectory.

We could then try to find a bound on escape time by looking at
positivity of expressions of the form $\vec{b}^T\exp(At)y_0$, where
$\vec{b}$ is normal to a hyperplane supporting a face of the
polytope. Unfortunately, these expressions contain terms corresponding
to many different eigenvalues, which significantly complicates the analysis.
We get around this problem in
Subsection~\ref{sec:hypercube} by bounding the distance of the
polytope to the origin and to the set of fixed points of the
differential equation using hypercubes in the Jordan basis. This
allows us to disentangle the effects of the different eigenvalues.  We
prove that the trajectories of the system escape the enclosing
hypercube, and use the escape time of the hypercube as an upper bound
on the escape time of the polytope.

Our next step is then, in Subsection~\ref{sec:escaping-time}, to
compute a uniform escape bound for our hypercube.  Finally,
Subsection~\ref{sec:combinesbound} combines the results from the
previous sections to get the desired bound on the escape time of the
original polytope.

\subsection{Removing the Complex Eigenvalues}
\label{sec:complex-eiger}

Let $(A,B,\myvector{c})$, be a positive CPEP instance. Assume 
for now that $A$ is given in Jordan normal form. This assumption is not without  
cost as we will see in the next subsection.
In this subsection, we consider a single block $J_i$ of $A$ corresponding to a non-real eigenvalue $\lambda_i$.
Considering only the dimensions associated to the Jordan block $J_i$ (\emph{i.e.}, the 
space $\mathcal{V}_{\lambda}$) and writing
$k= \nu(\lambda_i)$, we have that
given an initial point $\vec{x}_0=[x^{(1)},\dots,x^{(k)}]$,
the components of the trajectory $\vec{x}(t)$ are
$$
{\begin{bmatrix} 
	x^{(1)}(t)\\
	x^{(2)}(t)\\
	\vdots\\
	x^{(k)}(t) \\
	\end{bmatrix}} = \exp(\lambda_i t)\begin{bmatrix} 
x^{(1)} + x^{(2)} t + x^{(3)} t^2/2 + \dots + x^{(k)} t^{k-1}/(k-1)!\\
x^{(2)} + x^{(3)} t + \dots + x^{(k)} t^{k-2}/(k-2)!\\
\vdots\\
x^{(k)} \\
\end{bmatrix}.$$

In order to compute the escape times in the presence of non-real eigenvalues we use 
the fact that if a convex set contains a spiralling or helical trajectory, it must 
contain the axis of that trajectory. 
A trajectory starting on this axis is not affected by the
eigenvalue that generates the rotation, 
moreover, if the trajectory starting in the axis escapes, then 
the original trajectory also escapes (albeit, potentially a bit later). 
This allows us to reduce to the case where we only have real eigenvalues.
The following lemma formalizes this intuition. 

\begin{restatable}[Zero in convex hull]{lemma}{generzero} 
	\label{lem:generzero}
Let $$\vec{x}(t) = (p_{1,0}(t)e^{\lambda_1 t},\dots, p_{1,\nu(\lambda_1)-1}(t)e^{\lambda_1 t} \, , \dots , p_{r,0}(t)e^{\lambda_r t},\dots,p_{r,\nu(\lambda_r)-1}(t)e^{\lambda_r t} )^T$$ be a trajectory where, for all $j$, $\lambda_j = \eta_j + i\theta_j$, 
$\theta_j$ is non-zero, and $p_{j,k}$ is the Taylor polynomial corresponding to the factor $e^{\lambda_j t}$ of degree $k$. 
Then there exists a time $T$ such that $\Conv(X(T))$ contains the origin (where $\Conv$ represents the convex hull). In particular, this $T$ satisfies $$T \leq \sum_{j=1}^r \nu(\lambda_j)\frac{\pi}{\theta_j}.$$
\end{restatable}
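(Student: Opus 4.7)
The plan is to exhibit $\vec{0}$ as an explicit convex combination of finitely many values $\vec{x}(t_i)$ with $t_i\in[0,T]$ for $T=\sum_{j=1}^r\nu(\lambda_j)\pi/\theta_j$. For each eigenvalue $\lambda_j$ I will define a convex-averaging operator $R_j$ which, when iterated $\nu(\lambda_j)$ times, annihilates the $\lambda_j$-block of coordinates while preserving the $e^{\lambda_\ell t}\cdot(\text{polynomial})$ shape of every other block. Composing these operators eigenvalue-by-eigenvalue then forces every block to vanish, exhibiting $\vec{0}$ in $\mathrm{Conv}(X(T))$.

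The heart of the argument is a single-eigenvalue reduction. Fix a non-real $\lambda=\eta+i\theta$ (take $\theta>0$) and set $\tau:=\pi/\theta$, so that $e^{\lambda\tau}=-e^{\eta\tau}$. Choose $\alpha:=e^{\eta\tau}/(1+e^{\eta\tau})$ and $\beta:=1/(1+e^{\eta\tau})$; these are positive and sum to $1$, and they satisfy $\beta e^{\eta\tau}=\alpha$. Define $R_\lambda(\vec{f})(t):=\alpha\vec{f}(t)+\beta\vec{f}(t+\tau)$. For a function $\vec{f}(t)=e^{\lambda t}p(t)$ a direct computation gives
\[
R_\lambda(\vec{f})(t)\;=\;\alpha e^{\lambda t}\bigl[p(t)-p(t+\tau)\bigr],
\]
strictly dropping the polynomial degree. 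Writing $R_\lambda=\alpha\,\mathrm{Id}+\beta\,S_\tau$ with $S_\tau$ the time-shift by $\tau$, iteration yields $R_\lambda^{m}(\vec{f})(t)=\sum_{k=0}^{m}\binom{m}{k}\alpha^{m-k}\beta^{k}\vec{f}(t+k\tau)$, a convex combination of $m+1$ values of $\vec{f}$ on $[t,t+m\tau]$. Substituting $\vec{f}(t+k\tau)=(-e^{\eta\tau})^{k}e^{\lambda t}p(t+k\tau)$ and using $\beta e^{\eta\tau}=\alpha$, this simplifies to $\alpha^{m}e^{\lambda t}\sum_{k=0}^{m}(-1)^{k}\binom{m}{k}p(t+k\tau)$, the $m$-th finite difference of a polynomial of degree $\le m-1$, which vanishes identically once $m>\deg p$.

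To lift this to the full trajectory, let $R_j$ be the operator built from $\lambda_j$. On a coordinate carrying a foreign eigenvalue $\lambda_\ell$ ($\ell\neq j$), a one-line computation gives $R_j\bigl(e^{\lambda_\ell t}p(t)\bigr)=e^{\lambda_\ell t}\bigl[\alpha_j p(t)+\beta_j e^{\lambda_\ell\pi/\theta_j}p(t+\pi/\theta_j)\bigr]$, still of the required shape and of degree no greater than $\deg p$. Apply the composition $R_r^{\nu(\lambda_r)}\circ\cdots\circ R_1^{\nu(\lambda_1)}$ to $\vec{x}$. Because every polynomial in the $\lambda_j$-block has degree at most $\nu(\lambda_j)-1$, the $j$-th phase annihilates that block, and linearity of each subsequent $R_{j'}$ keeps it zero. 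After the $r$ phases every block is zero, so the resulting value is $\vec{0}$; and by composing the phase-wise binomial weights, this $\vec{0}$ is a convex combination of original trajectory points $\vec{x}(t_i)$ at times in $[0,\sum_j\nu(\lambda_j)\pi/\theta_j]$, since a product of nonnegative weights each summing to $1$ again sums to $1$ and is nonnegative.

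The main care needed is bookkeeping: one must verify both that the nested averages remain genuinely convex combinations of the \emph{original} trajectory values (which reduces to $(\alpha_j+\beta_j)^{\nu(\lambda_j)}=1$ for each phase, then taking products across phases) and that the algebraic shape $e^{\lambda_\ell t}\cdot(\text{polynomial of degree}\le\nu(\lambda_\ell)-1)$ is preserved from one phase to the next so that the next phase can even be applied. Both checks amount to the two identities displayed above, so no deeper obstacle arises.
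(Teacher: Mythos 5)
Your proposal is correct and follows essentially the same approach as the paper: both form a convex combination of $\vec{x}(t)$ and $\vec{x}(t+\pi/\theta_j)$ with weights $e^{\eta_j\pi/\theta_j}/(1+e^{\eta_j\pi/\theta_j})$ and $1/(1+e^{\eta_j\pi/\theta_j})$, iterate $\nu(\lambda_j)$ times to kill the $\lambda_j$-block, and handle the eigenvalues one after another. The only difference is presentational---the paper zeroes out the block one coordinate at a time by an inner induction, while you package the step into an averaging operator $R_j$ and invoke the finite-difference identity $R_j^m(e^{\lambda t}p)=\alpha^m e^{\lambda t}(-1)^m\Delta^m p$ to annihilate the block all at once, which is the same computation in a cleaner wrapper.
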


\begin{proof}[Proof Sketch]
The basic idea is to take an initial point parametrized by $t$, travel along the trajectory to the point of opposite phase for a particular component, and create a new point where this component is equal to 0 by adding together a suitable convex combination of the opposite-phase point and the initial one. 
Since both these points were parametrized by $t$, we can take the trajectory starting in the newly created point (which lies in the convex hull of the original trajectory) and repeat for the other dimensions until every component corresponding to the $\mathcal{V}^c$ subspace are equal to 0. 
	
\end{proof}

\subsection{Replacing the Polytopes with Hypercubes}
\label{sec:hypercube}

Let $(A,B,\myvector{c})$ be a $d$-dimensional positive CPEP instance,
$J\in \R^{d\times d}$ a matrix in Jordan normal form, and $Q\in
\R^{d\times d}$ be such that $A = Q^{-1}J Q$.  In the first instance
we are interested in obtaining an escape-time bound in the special
case that all eigenvalues of $A$ are real.

Let us assume that all eigenvalues of $A$ are real.  Our approach is
to perform a change of basis.  To this end we note that the trajectory
$\vec{x}(t)=\exp(At)\vec{x}_0$ escapes the polytope
$\poly_{B,\myvector{c}}$ for all $\vec{x}_0\in\mathbb{R}^d$ if and
only if the trajectory $\vec{y}(t)=\exp(Jt)\vec{y}_0$ escapes the
polytope $\poly_{BQ^{-1},\myvector{c}}$ for all
$\vec{y}_0\in\mathbb{R}^d$.  (Note that all entries of $Q^{-1}$ are
real algebraic.)  Below we analyse the latter version of CPEP, i.e.,
with a matrix $J$ in Jordan form with real algebraic entries.

The key intuition is that for every initial vector $\vec{y}_0 \in
\mathbb{R}^d$ the trajectory $\vec{y}(t)=\exp(Jt)\vec{y}_0$ will
either converge to a fixed point of the system or otherwise will
diverge to infinity in length.  In either case the trajectory must exit
the polytope since the polytope is bounded and does not meet the set
$F:=\{ \vec{y} \in \mathbb{R}^d \mid J\vec{y}=\vec{0}\}$ of fixed points.
We are thus led to define constants $C,\varepsilon>0$ such that every
trajectory $\vec{y}(t)=\exp(Jt)\vec{y}_0$ that either exits the hypercube
$[-C,C]^d$ or comes within distance $\varepsilon$ of the set $F$
of fixed points will necessarily have left the polytope
$\poly_{BQ^{-1},\myvector{c}}$.  More precisely, we seek $C>0$
and $\varepsilon>0$ such that:
\begin{enumerate}
\item $\poly_{BQ^{-1},\myvector{c}} \subseteq [-C,C]^d$,
\item For all $\myvector{y} \in F$
the hypercube $\{\boldsymbol{y}+\boldsymbol{x}\mid\boldsymbol{x}\in[-\varepsilon,\varepsilon]^n\}$
does not meet $\poly_{BQ^{-1},\myvector{c}}$.
\end{enumerate}
Note that such a positive $\varepsilon$ must exist since,
$\poly_{BQ^{-1},\myvector{c}} \cap F = \emptyset$,
$\poly_{BQ^{-1},\myvector{c}}$ is compact, and $F$ is closed.  Having
computed $C$ and $\varepsilon$, we obtain the escape bound for the
polytope $\poly_{BQ^{-1},\myvector{c}}$ by computing the time to
either exit the hypercube in Item 1 or enter one of the hypercubes
mentioned in Item 2.


%



In order to compute the escape bound, we only need the upper
bound on the ratio $C/\varepsilon$ given in the following lemma.


\begin{restatable}{lemma}{hypsize} 
	\label{lem:hypsize}
Let $(A,B,\myvector{c})$, be a $d$-dimensional positive CPEP instance
involving rationals, each of at most $b\in \N$ bits. One can select
$C\in \R$ and $\varepsilon >0$ satisfying Conditions 1 and 2, above,
and such that
$$\frac C \varepsilon \leq  \exp\left ( 640 b d^{3d+8}\right ).$$
\end{restatable}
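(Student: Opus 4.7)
The plan is to pull the problem back to the original rational coordinates, where classical Cramer-style and LP-sensitivity bounds apply, and then to absorb the distortion caused by the Jordan change of basis into bounds on $\|Q\|_\infty$ and $\|Q^{-1}\|_\infty$. Using Cai's polynomial-time Jordan algorithm \cite{Cai94} together with Hadamard's inequality and Mahler-measure bounds for roots of integer polynomials, one obtains that the entries of $Q$ and $Q^{-1}$ are algebraic numbers of degree at most $d$ whose absolute values satisfy $\|Q\|_\infty, \|Q^{-1}\|_\infty \leq \exp(O(b\, d^{d+O(1)}))$; the dominant $d^{d}$ factor comes from Cramer-style expressions on $d\times d$ matrices.

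To bound $C$, I would use that every point $\myvector{y} \in \poly_{BQ^{-1}, \myvector{c}}$ is of the form $Q\myvector{x}$ with $\myvector{x} \in \poly_{B, \myvector{c}}$, and that the vertices of the rational polytope $\poly_{B, \myvector{c}}$ have coordinates of bit size $O(d b)$ by Cramer's rule. Combining yields $\|\myvector{y}\|_\infty \leq d\,\|Q\|_\infty \cdot 2^{O(db)}$, furnishing an admissible $C$ of the required size.

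For the lower bound on $\varepsilon$, I would observe that the $\ell_\infty$-distance from $\poly_{BQ^{-1}, \myvector{c}}$ to $F = \ker(J)$ is, up to a multiplicative factor of $\|Q^{-1}\|_\infty$, equal to the $\ell_\infty$-distance from $\poly_{B, \myvector{c}}$ to $\ker(A)$ in the original coordinates. The latter is the optimum of a rational linear program (minimise $t$ subject to $-t\vec{1} \leq \myvector{p} - \myvector{z} \leq t\vec{1}$, $B\myvector{p} \leq \myvector{c}$, and $A\myvector{z} = \vec{0}$) with data of bit size $O(b)$. By Theorem~\ref{th:fixtrapC} and the positivity of the CPEP instance, this optimum is strictly positive; standard vertex-solution bounds for rational LPs then give a lower bound of $2^{-\mathrm{poly}(d, b)}$.

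The main obstacle will be the careful bookkeeping of exponents, in particular how the $d^{d}$-style blow-up in $\|Q\|_\infty$ and $\|Q^{-1}\|_\infty$ combines with the Cramer-style polynomial factors both in the upper bound on $C$ and in the lower bound on $1/\varepsilon$. Once these constants are tallied, the ratio $C/\varepsilon$ fits inside $\exp(640\,b\,d^{3d+8})$ as claimed.
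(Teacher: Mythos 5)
The overall plan—work in the original rational coordinates, use LP sensitivity to lower-bound the distance to the fixed-point set, and absorb the Jordan change of basis into bounds on $\|Q\|_\infty$ and $\|Q^{-1}\|_\infty$—is a genuinely different and, for the $\varepsilon$ part, arguably cleaner route than the paper takes: the paper bounds vertex heights directly in the Jordan basis and relies on Liouville's inequality for the lower bound, whereas your LP formulation produces a rational optimum whose size is controlled by standard vertex-solution bounds, avoiding Liouville for that step. This is a nice observation and worth pursuing.

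However, there is a concrete gap in the key estimate. You assert that the entries of $Q$ and $Q^{-1}$ are algebraic numbers of degree at most $d$. This holds only for $Q^{-1}$: one can choose the generalised eigenvector associated with $\lambda$ to have entries in $\Q(\lambda)$, a field of degree at most $d$. But $Q=(Q^{-1})^{-1}$ is obtained by cofactor expansion, and cofactors mix entries coming from the different eigenvalue blocks; the resulting entries live in the splitting field of the characteristic polynomial, which can have degree as large as $d!$. This matters because to upper-bound $|Q_{ij}|$ you must lower-bound the nonzero determinant $\det(Q^{-1})$, and the only general tool for that is a Liouville-type bound, which is exponential in the \emph{degree} of the algebraic number. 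That is precisely where the paper's $d!$ (hence the extra $d^{2d}$ in the exponent) enters, and you cannot avoid it by Cramer/Hadamard bookkeeping alone. Consequently the claimed intermediate estimate $\|Q\|_\infty,\|Q^{-1}\|_\infty\le\exp(O(b\,d^{d+O(1)}))$ is not justified, and the ``careful tallying'' promised at the end would not close: the condition number of $Q$ is the dominant term and requires the splitting-field degree. If you correct the degree to $d!\le d^d$ and rerun Liouville, your LP-based route should still recover a bound of the form $\exp(O(b\,d^{3d+O(1)}))$, which is what the lemma asserts; but as written the proposal understates the main difficulty.
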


\begin{proof}[Sketch of proof.]
The proof relies on Liouville's inequality, which states that algebraic numbers can be bounded in terms of the degree and height (coefficient size) of their minimal integer polynomial, and an arithmetic complexity lemma which bounds the logarithmic height of the output of an arithmetic circuit in terms of the heights of the inputs. We analyse the computation of vertices of the polytope in the Jordan basis with these results to arrive at the final bound.
%
\end{proof}

Let us illustrate how the change of basis can lead to an exponential size polytope.
Consider the matrix
 \begin{equation*}
A= \begin{bmatrix}

	1		&&	1			&&	0		\\
	0		&&	1			&&	1		\\
	0		&&	0			&&	1.01	\\
\end{bmatrix},
\end{equation*} its associated Jordan decomposition
\begin{equation*}
A= Q^{-1}J Q=\begin{bmatrix}

	1		&&	0			&&	10000	\\
	0		&&	1			&&	100		\\
	0		&&	0			&&	1	\\
\end{bmatrix}\begin{bmatrix}

	1		&&	1			&&	0		\\
	0		&&	1			&&	0		\\
	0		&&	0			&&	1.01	\\
\end{bmatrix}\begin{bmatrix}

	1		&&	0			&&	-10000	\\
	0		&&	1			&&	-100		\\
	0		&&	0			&&	1	\\
\end{bmatrix}
\end{equation*}
and the polytope $\poly = \{(0,1,x_3)\in \R^3 \mid 0\leq x_3\leq 1\}$.
This polytope is contained in the hypercube of size $C=1$ and every point is at least
at distance $\varepsilon = 1$ from any fixed point.
However, in the Jordan basis, this  polytope becomes equal to the set 
$\{(-10000 x_3, 1-100 x_3, x_3)\in \R^3 \mid (0,1,x_3)\in \poly\}$, 
which forces a choice of $C$ and $\varepsilon$ such that $\frac C \varepsilon\geq 10 000$.

%

In general, using the same reasoning on the matrix of dimension $d$
\begin{equation*}
A = \begin{bmatrix}
1	&&	1		&&	0		&&	\cdots	&&	0		\\
0		&&	1	&&	1		&&	\cdots	&&	0		\\
\vdots	&&	\vdots	&&	\vdots	&&	\ddots	&&	\vdots	\\
0		&&	0		&&	0		&&	\cdots	&&	1		\\
0		&&	0		&&	0		&&	\cdots	&&	1 + 1/2^b	\\
\end{bmatrix},
\end{equation*}
leads to a blowup in the value for $C/\varepsilon$ of $2^{b(d-1)}$, thus exponential in the dimension.

The bound obtained in Lemma~\ref{lem:hypsize} is however doubly exponential in the 
dimension. Analysing the proof of the lemma, in order to obtain an example for which the bound is 
tight, one would need to build a family of polynomials with splitting fields of 
degree esponential in the degree of the polynomial. Such polynomials unfortunately seem
hard to find.


\subsection{Computing an Upper Bound on the Escape Time for each Eigenspace}
\label{sec:escaping-time}



Consider a real eigenvalue $\lambda$ of the Jordan matrix $J$ associated with a Jordan block of size $k$. Let $\vec{x}_0=(x^{(1)},x^{(2)},\dots,x^{(k)})$ be a point in the polytope. By 
construction of $C$, we know that $\forall i\leq k, x^{(i)}\leq C$.
The trajectory $\vec{x}(t)$, in 
that generalized eigenspace is
$$
{\begin{bmatrix} 
	x^{(1)}\\
	x^{(2)}\\
	\vdots\\
	x^{(k)} \\
	\end{bmatrix}}(t) = \exp(\lambda t)\begin{bmatrix} 
x^{(1)} + x^{(2)} t + \frac{x^{(3)} t^2}{2} + \dots + \frac{x^{(k)} t^{k-1}}{(k-1)!}\\
x^{(2)} + x^{(3)} t + \dots + \frac{x^{(k-1)} t^{k-2}}{(k-2)!}\\
\vdots\\
x^{(k)} \\
\end{bmatrix}.$$

The trajectory, limited to this Jordan block, will either escape the
hypercube $[-C,C]^d$ that encloses $\poly_{BQ^{-1},\myvector{c}}$
, or will become so small that it will be at distance
less than $\varepsilon$ from the fixed point $\vec{0}$.  We therefore
consider three cases: $\lambda=0$ and $\lambda>0$ for which the
trajectory will grow, and $\lambda<0$ which decreases the
coefficients.  Once we have an escape bound for each eigenvalue, we
will deduce a uniform bound for the entire trajectory.

Note that escaping the hypercube or converging to a fixed point do
not give symmetric results: If we find a single component that grows
larger than $C$, this is enough to escape the polytope, but
\textit{all} dimensions need to become smaller than $\varepsilon$ in
order to escape via entering the $\varepsilon$-region around the fixed
point.


\noindent {\bf{Case $\lambda<0$.}}

%
%
%
%
%

For all 
$j\leq k$, $x^{(j)} (t) = \exp(\lambda t)\sum_{i=j}^{k} x^{(i)}\frac{t^{i-j}}{(i-j)!}.$
Using the bounds on the coefficients, we thus have when $t>1$

$$|x^{(j)} (t)| = |\exp(\lambda t)\sum_{i=j}^{k} x^{(i)}\frac{t^{i-j}}{(i-j)!}| 
\leq \exp(\lambda t) kCt^k \;\text{ for } j \in \{1,\ldots,k\}$$

In order to have $|x^{(j)} (t)| < \varepsilon$, it is enough to have $\exp(\lambda t) kCt^k < \varepsilon$, which is equivalent to
$\frac{kCt^k}{\varepsilon} < \exp(-\lambda t)$, and
$  t > \frac{1}{-\lambda}\log \left (\frac{kC}{\varepsilon}\right ) + \frac{k}{-\lambda}\log t$

Here we need a small technical lemma.\\

\begin{lemma}
	[Lemma A.1 and A.2 from \cite{shalev2014understanding}]
	\label{lem:techi}
	Suppose $a \geq 1$ and $b > 0,$ then $t \geq a \log t + b$ if $t \geq 4a\log(2a) + 2b$.
\end{lemma}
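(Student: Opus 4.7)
The plan is to linearize $\log t$ via the elementary inequality $\log y \le y - 1$ (valid for all $y > 0$), applied with $y = t/(2a)$. The scaling by $2a$ is chosen precisely so that the resulting linear upper bound on $a\log t$ has leading coefficient $t/2$, leaving the other half of $t$ free to absorb the additive constant $b$ in the target inequality.

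Concretely, writing $\log t = \log(2a) + \log\bigl(t/(2a)\bigr)$ and invoking $\log(t/(2a)) \le t/(2a) - 1$, I obtain
\begin{equation*}
a\log t \;\le\; \tfrac{t}{2} \,+\, a\log(2a) \,-\, a.
\end{equation*}
Adding $b$ and rearranging, the desired conclusion $t \ge a\log t + b$ follows from the purely linear sufficient condition
\begin{equation*}
t \;\ge\; 2a\log(2a) \,-\, 2a \,+\, 2b.
\end{equation*}
It then remains to verify that the hypothesis $t \ge 4a\log(2a) + 2b$ implies this bound, i.e., that $4a\log(2a) + 2b \ge 2a\log(2a) - 2a + 2b$, equivalently $2a\bigl(\log(2a) + 1\bigr) \ge 0$. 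This is immediate from $a \ge 1$, which forces $\log(2a) \ge \log 2 > 0$.

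I anticipate no real obstacle: the lemma is essentially a calibration of constants, and the entire argument rests on the single inequality $\log y \le y - 1$. The only delicate point is the choice of the scaling factor inside the logarithm; using $y = t/a$ in place of $t/(2a)$ would yield a coefficient of $t$ (rather than $t/2$) on the right-hand side, leaving no slack to accommodate $b$. The factor of $2$ hidden in $t/(2a)$ is what makes the bookkeeping work out and, in turn, why the hypothesis carries the coefficient $4a$ rather than $2a$ in front of $\log(2a)$.
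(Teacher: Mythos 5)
Your proof is correct. Note first that the paper does not supply its own argument for this lemma; it simply cites Lemmas~A.1 and~A.2 of Shalev-Shwartz and Ben-David's textbook, so there is no in-paper proof to compare against. Your self-contained argument via the single inequality $\log y \le y - 1$ applied at $y = t/(2a)$ is valid: the chain
$a\log t \le t/2 + a\log(2a) - a$
reduces the claim to the linear sufficient condition $t \ge 2a\log(2a) - 2a + 2b$, and the hypothesis $t \ge 4a\log(2a) + 2b$ dominates it because $2a(\log(2a)+1) \ge 0$ when $a \ge 1$. (One implicit point worth stating: $a\ge 1$ and $b>0$ force $t \ge 4a\log(2a)+2b > 0$, so $\log t$ and $\log(t/(2a))$ are well defined.) For what it is worth, the cited textbook proves A.1 by a monotonicity argument on $f(x) = x - a\log x$ using its derivative, and then derives A.2 by splitting the hypothesis into two halves ($x \ge 2a\log x$ and $x \ge 2b$); your linearization is more elementary, avoids calculus entirely, and in fact exposes the slightly sharper threshold $2a\log(2a) - 2a + 2b$ as a by-product.
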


Applying this lemma with $a = \max \{1, \frac{k}{-\lambda}\}$ (we assume $\frac{k}{-\lambda}>1$ in the following in order not to overload the formulas) and 
$b = \frac{1}{-\lambda}\log \left (\frac{kC}{\varepsilon}\right )$, 
we get a bound $T_\lambda$ such that for all $j\leq k$, $x^{(j)}(T)<\varepsilon$, namely 
$$\boxed{T_\lambda \leq \frac{4k}{-\lambda
}\log\left(\frac{2k}{-\lambda}\right) + \frac{2}{-\lambda}\log 
\left (\frac{kC}{\varepsilon}\right ).}$$

\noindent {\bf{Case $\lambda=0$.}}

In this case, the trajectory restricted to this eigenspace is 
$$x^{(j)} (t) = \sum_{i=j}^{k} x^{(i)}\frac{t^{i-j}}{(i-j)!} \;\text{ for } j \in \{1,\ldots,k\}.$$ 


Assume that there exists $j \geq 2$ such that $|x^{(j)}| > \varepsilon$. This holds because by definition of $\varepsilon$ a point of the polytope is at distance
at least $\varepsilon$ from a fixed point.
Moreover, $x^{(1)}$ is excluded because the line $\{x_j = 0 \mid j \neq 1\} $ is a line of fixed points of the differential equation.
Now we require a time $T_\lambda$ such that at least one of these components is larger in magnitude than $|C|$.
We construct an upper bound on this time iteratively, using the fact that at least one coefficient $x^{(j)}$ is greater than $\varepsilon$, and all of them are less than $C$, 
giving the following bound on $T_\lambda$:

$$\boxed{T_\lambda \leq \frac{1}{k} \left (\frac{k^2C}{\varepsilon}\right )^{2^{k-1}} .}$$


\noindent {\bf{Case $\lambda>0$.}}

This case proceeds similarly to the $\lambda=0$ case, although the presence of an exponential factor gives us a much better bound $T_\lambda$:
$$\boxed{T_\lambda \leq \frac{2^{k-1}}{\lambda}\log\left (\frac{kC}{\varepsilon}\right ) .}$$ 

%

\subsection{Constructing a Uniform Bound}
\label{sec:combinesbound}

We can now combine the results of the previous sections to get a uniform escape
bound, considering all eigenvalues (real or not) simultaneously. 
Let the complex eigenvalues of $A$ be $\{\eta_1 + i \theta_1,\eta_1 -
i \theta_1 \dots,\eta_{r} + i \theta_{r}, \eta_{r} - i \theta_{r}\}$
and the real eigenvalues be $\{\lambda_{1}, \dots, \lambda_{s}\}$.
Consider an arbitrary trajectory $\myvector{x}(t)$ satisfying the
differential equation $\dot{\myvector{x}}(t)=A\myvector{x}(t)$.  By
Lemma~\ref{lem:generzero} we know that for $T_c := \sum_{j=1}^r
\nu(\eta_j + i\theta_j)\frac{\pi}{\theta_j}$ there exists a point in
the convex hull of $\{ \myvector{x}(t)\mid 0\leq t \leq T_c\}$ that
lies in the real eigenspace of $A$.  This allows us to derive 
a bound on the escape time of the polytope $\poly$ from a bound on the escape time
of $\poly \cap \mathcal{V}^r$. 
Indeed, let $T_r$ be such that every "real" trajectory escapes the polytope
in time $T_r$. Then any "complex" trajectory of duration $T_c+T_r$ 
contains in its convex hull a "real" trajectory of duration $T_r$ which thus must have escaped the polytope. As the polytope is convex, this means that the 
complex trajectory itself escaped.

As for the subspace $\mathcal{V}^{r}$, we can derive from the 
escape bounds $T_\lambda$
on each eigenspace computed in Subsection~\ref{sec:escaping-time}
a time bound beyond which every real point has escaped the polytope.

\begin{lemma}[Real Time Bound]
	\label{prop:realbfin}
	Given an initial point $\vec{x}_0 \in \mathbb{R}^n$ with zero components in $\mathcal{V}^{c}$, 
	the trajectory $\vec{x}(t)$ escapes within time $T_r= 2 \max_{\lambda} T_\lambda.$ 
\end{lemma}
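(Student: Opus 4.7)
The plan is to argue by contradiction. Write $T^* := \max_\lambda T_\lambda$ and suppose that a trajectory starting in $\poly \cap \mathcal{V}^{r}$ remains inside $\poly \subseteq [-C,C]^d$ throughout the interval $[0,2T^*]$. The key step is to show that the midpoint state $\vec{x}(T^*)$ already lies in a translate $\vec{y}_F + [-\varepsilon,\varepsilon]^d$ of the small hypercube around some fixed point $\vec{y}_F \in F$, which by Condition~(2) of Subsection~\ref{sec:hypercube} cannot meet $\poly$; this will contradict $\vec{x}(T^*) \in \poly$.

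For each real Jordan block of $J$ with eigenvalue $\lambda$ and size $k$, I examine $\vec{x}(T^*)$ coordinatewise using the bounds of Subsection~\ref{sec:escaping-time}. When $\lambda<0$, the majorant $|x^{(j)}(t)| \leq e^{\lambda t} kCt^k$ is decreasing for $t>-k/\lambda$, and a quick inspection shows $T_\lambda$ comfortably exceeds this turning point, so every coordinate of the block satisfies $|x^{(j)}(T^*)| < \varepsilon$. When $\lambda>0$, I treat $\vec{x}(T^*)$ as a fresh initial state whose coordinates are bounded by $C$ (since $\vec{x}(T^*) \in [-C,C]^d$): if any coordinate of the block exceeded $\varepsilon$ at time $T^*$, the $\lambda>0$ bound would force some coordinate to exceed $C$ by time $T^* + T_\lambda \leq 2T^*$, contradicting the standing assumption; hence every coordinate in the block satisfies $|x^{(j)}(T^*)| \leq \varepsilon$. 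The case $\lambda=0$ is analogous, except that the bound only applies to coordinates $j \geq 2$, yielding $|x^{(j)}(T^*)| \leq \varepsilon$ only for those. This is exactly where the factor of $2$ in $T_r$ enters: I need room to extend the dynamics a further $T_\lambda$ past $T^*$ for every $\lambda \geq 0$ block.

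To finish, define $\vec{y}_F$ in the Jordan basis by copying the $j=1$ entries of $\vec{x}(T^*)$ on every $\lambda=0$ Jordan block and setting all other entries to zero. These entries are exactly a basis of $\ker J$, so $\vec{y}_F \in F$. Coordinate by coordinate, $\vec{x}(T^*) - \vec{y}_F$ vanishes on the $j=1$ positions of $\lambda=0$ blocks and has magnitude at most $\varepsilon$ everywhere else, so $\vec{x}(T^*) \in \vec{y}_F + [-\varepsilon,\varepsilon]^d$, which contradicts Condition~(2). The main subtlety lies in the $\lambda=0$ blocks: the escape bound controls only the coordinates $j \geq 2$, but these are precisely the coordinates lying outside $\ker J$, so the uncontrolled $j=1$ coordinates can be absorbed without residual error into the fixed point $\vec{y}_F$.
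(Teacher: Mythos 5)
Your proof is correct and follows essentially the same structure as the paper's: by time $T^* = \max_\lambda T_\lambda$ the $\lambda < 0$ coordinates have shrunk below $\varepsilon$ (and stay there, since the majorant is decreasing past $T_\lambda$), and any remaining coordinate exceeding $\varepsilon$ in a $\lambda \geq 0$ block triggers escape within another $T_\lambda$, while if none does the state lies in an $\varepsilon$-cube around a fixed point. Your explicit construction of $\vec{y}_F$ to absorb the uncontrolled $j=1$ entries of the $\lambda=0$ blocks is a welcome sharpening of the paper's second bullet ("all coefficients are now smaller than $\varepsilon$"), which is worded imprecisely since those entries need not be small; the content and the bounds used from Subsection~\ref{sec:escaping-time} are otherwise identical.
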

\begin{proof}
	Within a time $T_r/2 =  \max_{\lambda} T_\lambda $, thanks to the analysis
	of subsection~\ref{sec:escaping-time}, there are three possibilities:
	\begin{itemize}
		\item the trajectory escapes the hypercube of size $C$, this occurs if there was a coefficient associated to a non-negative eigenvalue that was larger than $\eps$;
		\item all coefficients are now smaller than $\eps$, entering the hypercube of size $\eps$ and escaping the polytope since all the purely imaginary coefficients are zero;
		\item some component corresponding to a positive or zero eigenvalue originally 
		less than $\eps$ has become greater than $\eps$. In this case, waiting another 
		$T_r/2$ amount of time puts the trajectory in the first case, ensuring it escapes.	
	\end{itemize}
	Thus in all cases the trajectory has escaped by time $T_r$.
\end{proof}

From the above, we can deduce that every trajectory escapes within time $T_r + T_c$.
We finally obtain Theorem~\ref{th:globfin} by analysing the complexity of this time bound in terms
of the number of bits of the instance and its dimension.

The magnitude of the resulting escape bound is singly exponential in the bit size of the matrix entries and doubly exponential in the dimension of the matrix. However, if the matrix is diagonalizable or invertible, we can ignore the case where the eigenvalue is zero. Then the bound becomes $O(4^{bd^2})$ which is singly exponential in the bit size and dimension.
%

In Subsection~\ref{sec:hypercube} we showed how the change of basis explained the 
exponential factor in the number of dimensions. It is clear that the escape 
time can also be exponential in the bit size of the matrix.

For a very simple example, consider a 1-dimensional case where the
polytope is the interval $[1,2]$ and the differential equation is
$\dot{x}(t)=2^{-b}x(t)$ (which obviously can be written using constants
of bit size at most $b$).  Then the initial point $x_0=1$ yields a
trajectory $x(t) = \exp(2^{-b}t)x_0$ whose escape time is $2^b\log2$,
which is exponential in $b$.

\section{The Discrete Case}
\label{sec:discretesol}

Tiwari \cite{Tiw04} and Braverman \cite{Braverman2006} have shown decidability for 
the DPEP over the rationals and reals. In general, even if every trajectory is known to be escaping, it is not possible to place a uniform bound on the number of steps.
However if the polytope is compact, we can use techniques similar to those used
for the CPEP in order to provide a bound.

\begin{theorem}
	\label{th:bounddisc}
Given a $d$-dimensional positive DPEP instance $(A,B,\myvector{c})$ where the rational
numbers use at most $b\in\N$ bits and an initial point $\vec{x}_0$, then for 
$N=e^{bd^{O(d)}}$, we have $\vec{x}_N\not\in \poly_{B,\vec{c}}$. 
\end{theorem}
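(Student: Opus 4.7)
The plan is to adapt the four-step strategy used for the continuous case in Section~\ref{sec:bound-continuous} to the discrete setting, reusing the hypercube-bounding lemma essentially unchanged and re-deriving the per-eigenspace bounds in discrete coordinates. I would first invoke the discrete analog of Theorem~\ref{th:fixtrapC} (stated immediately after it in the paper): a positive DPEP instance means that $\poly_{B,\vec{c}}$ contains no fixed point $A\vec{x}=\vec{x}$ of the map. After the change of basis $A=Q^{-1}JQ$, the iterate becomes $\vec{x}_n=Q^{-1}J^nQ\vec{x}_0$ and each coordinate takes the closed form $\vec{x}_n^{(j)}=\sum_{\lambda\in\sigma(A)}p_\lambda(n)\lambda^n$ with $\deg p_\lambda\le\nu(\lambda)-1$.

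Next I would replay Lemma~\ref{lem:hypsize} almost verbatim: since its proof concerns only the geometry of the polytope, the change of basis to Jordan coordinates, and the positive distance between the polytope and a closed set of fixed points---here $F=\ker(A-I)$ rather than $\ker A$---the same height and arithmetic-circuit analysis yields constants $C,\varepsilon>0$ with $\poly_{BQ^{-1},\vec{c}}\subseteq[-C,C]^d$ in the Jordan basis, $\mathrm{dist}(\poly_{BQ^{-1},\vec{c}},F)\ge\varepsilon$, and $C/\varepsilon\le\exp\bigl(O(bd^{3d+8})\bigr)$.

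Then I would compute per-eigenspace iteration bounds analogous to Subsection~\ref{sec:escaping-time}, but stratified by $|\lambda|$ instead of by the sign of $\lambda$. For a real eigenvalue with $|\lambda|<1$, bounding each initial coefficient by $C$ and applying Lemma~\ref{lem:techi} yields $N_\lambda\le O\bigl(\log(kC/\varepsilon)/\log(1/|\lambda|)\bigr)$ steps after which every component of the block is smaller than $\varepsilon$; for $|\lambda|>1$, some component exceeds $C$ within $N_\lambda\le O\bigl(2^{k}\log(kC/\varepsilon)/\log|\lambda|\bigr)$ steps; and for the marginal case $\lambda=1$, the iteration is purely polynomial in $n$, so the same inductive argument that handles the continuous $\lambda=0$ case gives $N_\lambda\le\tfrac1k(k^2C/\varepsilon)^{2^{k-1}}$.

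The main obstacle is the discrete analog of Lemma~\ref{lem:generzero}: a non-real eigenvalue $\lambda=re^{i\theta}$ induces rotation by the \emph{fixed} angle $\theta$ per step, so the exact opposite-phase point used in the continuous proof may not occur at any integer time. My plan is to replace that cancellation with the following observation: once $n$ ranges over $\{0,1,\dots,N_\theta-1\}$ with $N_\theta=\lceil 2\pi/\theta\rceil+1$, the arguments $n\theta\pmod{2\pi}$ cover the unit circle with gap at most $\theta<\pi$, so the convex hull of the corresponding projections $\lambda^n$ already contains the origin of the non-real eigenspace; iterating this argument $\nu(\lambda)$ times absorbs the nilpotent part exactly as in the continuous proof. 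To make the resulting $N_c\le\sum_j\nu(\lambda_j)\lceil 2\pi/\theta_j\rceil$ fit into $e^{bd^{O(d)}}$ I need a lower bound on $\theta_j$; since $\lambda_j$ is algebraic of degree $\le d$ and logarithmic height $O(b)$, and $|\mathrm{Im}(\lambda_j)|=|\lambda_j|\sin\theta_j$, a Liouville-style estimate on $|\mathrm{Im}(\lambda_j)|$ yields $1/\theta_j\le e^{O(bd)}$. Combining $N_c$ with $T_r:=2\max_\lambda N_\lambda$ exactly as in Lemma~\ref{prop:realbfin} then produces a uniform iteration bound $N\le N_c+T_r=e^{bd^{O(d)}}$, matching Theorem~\ref{th:globfin}.
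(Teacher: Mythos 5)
Your approach is genuinely different from the paper's. The paper first eliminates negative real eigenvalues with a discrete version of Lemma~\ref{lem:generzero} (exploiting that $-r = r e^{i\pi}$ so the opposite-phase point occurs at exactly one integer step) and kills the zero eigenspace by nilpotency in at most $d$ steps; it then takes the matrix logarithm $G'=\Log J'$ of the remaining block (positive real and non-real eigenvalues), applies Theorem~\ref{th:globfin} to the resulting CPEP instance $(G',B,\myvector{c})$, and concludes with $N=\lceil T_c\rceil + d$. Your plan avoids the matrix logarithm altogether and stays entirely in discrete coordinates, replaying Lemma~\ref{lem:hypsize} with $F=\ker(A-I)$ and re-deriving per-eigenspace bounds stratified by $|\lambda|$; the new ``covering-the-circle'' argument with $N_\theta=\lceil 2\pi/\theta\rceil + 1$ points replaces the paper's exact opposite-phase trick. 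This buys a self-contained proof that sidesteps both the real-logarithm delicacy (which forces the paper to peel off negative eigenvalues in advance) and the somewhat hand-waved step of converting a continuous escape time $T_c$ into a discrete iteration count via $\lceil T_c\rceil + d$. On the other hand, the paper's reduction reuses Theorem~\ref{th:globfin} as a black box, while you must redo the full case analysis of Subsection~\ref{sec:escaping-time}.

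The one genuine gap is the treatment of \emph{negative real} eigenvalues. Your stratification handles real $\lambda$ with $|\lambda|<1$, $|\lambda|>1$ and $\lambda=1$, but leaves $\lambda=-1$ uncovered, and more importantly the inductive hypercube-escape argument of Subsection~\ref{sec:escaping-time} is a monotonicity argument that does not transfer verbatim to $\lambda<0$ because $\lambda^n$ alternates in sign. The paper avoids this by folding negative real eigenvalues into the convex-hull lemma with $\theta=\pi$, which you should do as well; note, however, that for $\theta=\pi$ your ``gap $<\pi$'' criterion fails (the residues $\{0,\pi\}$ leave a gap of exactly $\pi$), so you need the slightly weaker observation that $\Conv\{v,-rv\}$ contains the origin directly, i.e.\ two steps already suffice. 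A second, smaller point: the iteration ``absorbs the nilpotent part exactly as in the continuous proof'' deserves a check. The continuous proof works by zeroing the degree-0 coordinate (last coordinate in standard Jordan ordering) so that it stays zero for all time and every remaining polynomial drops in degree; in the discrete version the same property holds because $z^{(k)}(m)=\lambda^m z^{(k)}_0$, but since your convex combination now involves $N_\theta$ rather than $2$ trajectory points, you should spell out that zeroing $z^{(k)}_0$ still produces a trajectory of the required reduced-degree form before asserting the iteration closes. Once these two points are addressed, the resulting bound $N_c + 2\max_\lambda N_\lambda$ lands in $e^{bd^{O(d)}}$ exactly as the paper's does, dominated by the $\lambda=1$ (resp.\ paper's $\lambda=0$) case.
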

\begin{proof}[Sketch of proof]

The main idea of this proof is to reduce this problem to the continuous case. 
Assuming every eigenvalue is positive, the matrix logarithm $G$ of $A$ is well defined.
The trajectory of a continuous linear dynamical sysems generated by $G$ is of the form
$\vec{x}(t) = \exp(Gt)\vec{x}(0)$.
In particular, for an initial point $x_0$ and $n \in \Naturals$, we have
\[\vec{x}(n) = \exp(G n)\vec{x}_0 = \exp(G)^n \vec{x}_0 = A^n \vec{x}_0 = \vec{x}_n\]
Therefore, we can relate the escape time of the CPEP instance 
$(G,B,\myvector{c})$ to the escape time of the DPEP instance $(A,B,\myvector{c})$.

The eigenvalues that are not positive are dealt with using a variant of the convex hull Lemma~\ref{lem:generzero}.
%
\end{proof}


\bibliography{refs}

\appendix

\section{Proof of Section~\ref{sec:bound-continuous}}\label{appendix:undec}

\subsection{Proof of Lemma~\ref{lem:generzero}}

We establish this result by induction over $r$, the number of distinct eigenvalues.

%

\subparagraph{Base case.}

Assume $r=1$, we have 
$$\vec{x}(t) = e^{\eta_1 t}e^{i\theta_1 t}(p_{1,0}(t),p_{1,1}(t),\dots, p_{1,\nu(1)-1}(t)) \in \mathbb{C}^{\nu(1)}.$$
We define a new starting point belonging to the convex hull of the trajectory $\vec{x}(t)$ by
$$\vec{z}_{1}(0) = \frac{p_{1,0}(\frac{\pi}{\theta_1})e^{\eta_1 \frac{\pi}{\theta_1}  }}{p_{1,0}(0)+p_{1,0}(\frac{\pi}{\theta_1})e^{\eta_1 \frac{\pi}{\theta_1}  }}\vec{x}(0) +\frac{p_{1,0}(0)}{p_{1,0}(0)+p_{1,0}(\frac{\pi}{\theta_1})e^{\eta_1 \frac{\pi}{\theta_1}  }}\vec{x}(\frac{\pi}{\theta_1}).$$


Now observe that 
\begin{align*}
z_{1}(0) &= \frac{p_{1,0}(\frac{\pi}{\theta_1})e^{\eta_1 \frac{\pi}{\theta_1}  }}{p_{1,0}(0)+p_{1,0}(\frac{\pi}{\theta_1})e^{\eta_1 \frac{\pi}{\theta_1}  }}(  \, p_{1,0}(0),p_{1,1}(0),\dots, p_{1,\nu(1)-1}(0)  \,)\\ 
&+\frac{p_{1,0}(0)}{p_{1,0}(0)+p_{1,0}(\frac{\pi}{\theta_1})e^{\eta_1 \frac{\pi}{\theta_1}  }}e^{\eta_1 \frac{\pi}{\theta_1}  }e^{i\theta_1 (\frac{\pi}{\theta_1})}(  \, p_{1,0}(\frac{\pi}{\theta_1}),\dots, p_{1,\nu(1)-1}(\frac{\pi}{\theta_1})  \,)\\
&=\frac{e^{\eta_1 \frac{\pi}{\theta_1}  }}{p_{1,0}(0)+p_{1,0}(\frac{\pi}{\theta_1})e^{\eta_1 \frac{\pi}{\theta_1}  }}
(0,\dots, p_{1,0}(\frac{\pi}{\theta_1})p_{1,\nu(1)-1}(0) - p_{1,0}(0)p_{1,\nu(1)-1}(\frac{\pi}{\theta_1}))\\
&=(0, q_{1,0}(0),\dots, q_{1,\nu(1) - 2}(0)),
\end{align*}
where $q_{1,k}(t)$ is a polynomial of degree at most $k$. 



Iterating this process, we build the family of points $(\vec{z}_{k})_{k\leq \nu(\lambda_1)}$ such that the $k$ first coordinates of $\vec{z}_{k}$ are null. Thus,
we have $\vec{z}_{\nu(\lambda_1)}(t) = \vec{0} \in \mathbb{C}^{\nu(\lambda_1)}.$
Each step of this process requires an additional $\frac{\pi}{\theta_1}$ time units to ensure the constructed point belongs to the convex hull of the trajectory $\vec{x}(t)$. Thus after $T \geq \nu(\lambda_1)\frac{\pi}{\theta_1},$ we have $\vec{0} \in \Conv (X(T))$ as required.

\subparagraph{Inductive case.} Let $r\geq 1$ and 
$$\vec{x}(t) = (p_{1,0}(t)e^{\lambda_1 t},\dots, p_{1,\nu(\lambda_1)-1}(t)e^{\lambda_1 t} \, , \dots , p_{r+1,0}(t)e^{\lambda_{r+1} t},\dots,p_{r+1,\nu(\lambda_{r+1})-1}(t)e^{\lambda_{r+1} t} )^T.$$
By induction hypothesis, for $T_1=\sum_{j=1}^r \nu(\lambda_j)\frac{\pi}{\theta_j}$, there exists a point $\vec{z}_0$ in $\Conv(X(T_1))$ such that the components corresponding to the first $r$ eigenvalues remain equal to 0. Therefore, the trajectory starting in 
$\vec{z}_0$ is of the form 
$$\vec{z}(t) = (0,\dots, 0, q_{r+1,0}(t)e^{\lambda_{r+1} t},\dots,q_{r+1,\nu(\lambda_{r+1})-1}(t)e^{\lambda_{r+1} t} )^T$$
where the $q_{r,j}$ are polynomials.

Applying the process used in the base case, one gets that the zero vector belongs to
the set $\Conv(Z(T_2))$ for $T_2 = \nu(\lambda_{r+1}) \frac{\pi}{\theta_{r+1}}$.
Moreover, as $\vec{z}_0\in \Conv(X(T_1))$, we have that $\vec{0} \in \Conv(X(T_1+T_2))$.	

\subsection{Proof of Lemma~\ref{lem:hypsize}}


We first recall some known results on the heights of algebraic numbers. More details can be found in~\cite{Waldschmidt2000}.\\

\begin{defi}[Naive height]
	Given an algebraic number $\alpha$, its \emph{naive height} $H(\alpha)$  is the largest absolute value of any coefficient of its minimal polynomial in the ring $\mathbb{Z}[x]$. The \emph{degree} of $\alpha$ is the degree of this polynomial.\\
\end{defi}

\begin{lemma}[Liouville's inequality]
\label{lem:Liouv}
	Given $\alpha \neq 0$ an algebraic number, $$\frac{1}{H(\alpha)+1} < |\alpha| < H(\alpha)+1.$$
\end{lemma}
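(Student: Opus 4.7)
The plan is to prove the two sides of the inequality separately, and to obtain the lower bound by applying the upper bound to $1/\alpha$. Throughout, let $p(x) = a_n x^n + a_{n-1} x^{n-1} + \cdots + a_0 \in \mathbb{Z}[x]$ be the (primitive) minimal polynomial of $\alpha$, and write $H := H(\alpha) = \max_i |a_i| \geq 1$.

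For the upper bound $|\alpha| < H + 1$, the case $|\alpha| \leq 1$ is trivial since $H \geq 1$. Suppose $|\alpha| > 1$. Rearranging $p(\alpha) = 0$ as $a_n \alpha^n = -\sum_{i=0}^{n-1} a_i \alpha^i$, taking absolute values, and applying the triangle inequality together with a strict geometric-series estimate gives
\[
|a_n|\,|\alpha|^n \;\leq\; H \sum_{i=0}^{n-1} |\alpha|^i \;=\; H\,\frac{|\alpha|^n - 1}{|\alpha|-1} \;<\; \frac{H\,|\alpha|^n}{|\alpha|-1}.
\]
Dividing by $|\alpha|^n$ and rearranging yields $|\alpha| - 1 < H/|a_n| \leq H$, i.e. $|\alpha| < H+1$.

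For the lower bound, I use that $\alpha \neq 0$ forces the constant term $a_0$ of $p$ to be nonzero, for otherwise $p(x) = x \cdot r(x)$ would contradict irreducibility. Consequently $1/\alpha$ is a root of the reciprocal polynomial
\[
q(x) \;=\; x^n p(1/x) \;=\; a_0 x^n + a_1 x^{n-1} + \cdots + a_n \;\in\; \mathbb{Z}[x],
\]
which has the same multiset of coefficient absolute values as $p$. Since reciprocation $p \leftrightarrow q$ is an involution on $\mathbb{Q}[x]$ that turns factorizations into factorizations (via $x \leftrightarrow 1/x$), irreducibility of $p$ transfers to $q$, so $q$ is a scalar multiple of the minimal polynomial of $1/\alpha$; hence $H(1/\alpha) \leq H(\alpha)$. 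Applying the already-proved upper bound to $1/\alpha$ gives $|1/\alpha| < H(1/\alpha) + 1 \leq H + 1$, which rearranges to $|\alpha| > 1/(H+1)$, as required.

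The one step requiring a touch of care beyond routine algebra is the identification of $q$ as (a scalar multiple of) the minimal polynomial of $1/\alpha$; once irreducibility is established, everything else is direct manipulation of the defining equation $p(\alpha)=0$ and the triangle inequality.
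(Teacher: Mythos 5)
Your proof is correct. The paper itself gives no proof of this lemma; it simply records it as a known fact and points to Waldschmidt's book for details, so there is no argument in the paper to compare against. What you supply is the standard elementary argument (essentially Cauchy's root bound applied twice): for the upper bound you rearrange $p(\alpha)=0$, apply the triangle inequality, and use a geometric series together with $|a_n|\geq 1$; for the lower bound you pass to the reciprocal polynomial $q(x)=x^n p(1/x)$, observe that $a_0\neq 0$ (by irreducibility of $p$ and $\alpha\neq 0$) so $q$ has degree $n$, is primitive, and is irreducible, hence is the minimal polynomial of $1/\alpha$ with the same coefficient set, and then apply the upper bound to $1/\alpha$. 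All steps are sound; the case split on $|\alpha|\leq 1$ and the use of $H(\alpha)\geq 1$ (because the minimal polynomial has a nonzero integer coefficient) are handled correctly, and strictness is preserved throughout. The only cosmetic slip is the parenthetical that reciprocation ``is an involution on $\mathbb{Q}[x]$''---it is an involution only on polynomials with nonzero constant term---but since both $p$ and $q$ have nonzero constant terms this does not affect the argument, which you in fact justify correctly via the factorization transfer.
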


One can also define a logarithmic height of an algebraic number. It satisfies the following lemma.

\begin{lemma}[logarithmic height]
	Given an algebraic number $\alpha$ of degree $n$, its \emph{absolute logarithmic height} $h(\alpha)$ satisfies the following relations.
	$$\frac{1}{n} \log H(\alpha) - \log 2 < h(\alpha) < \frac{1}{n} \log H(\alpha) +\frac{1}{2n} \log (n+1)$$
	
	Moreover, for algebraic numbers $\alpha_1, \alpha_2, \dots, \alpha_k$, we have
	
	$$h(\prod_{i=1}^k \alpha_i) \leq \sum_{i=1}^k h(\alpha_i), \mbox{ and }
	h(\sum_{i=1}^k \alpha_i) \leq \log k + \sum_{i=1}^k h(\alpha_i).$$
	
\end{lemma}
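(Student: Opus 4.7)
My plan is to prove the lemma in two separate blocks: first the two-sided comparison between $h(\alpha)$ and $\frac{1}{n}\log H(\alpha)$, and then the subadditivity properties of $h$ under products and sums.

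For the comparison inequalities, I would adopt the Mahler-measure definition of the absolute logarithmic height: if $P(x)=a_n\prod_{i=1}^n(x-\alpha_i)\in\mathbb{Z}[x]$ is the minimal polynomial of $\alpha$ (so $\deg \alpha=n$), then the Mahler measure is $M(\alpha)=|a_n|\prod_{i=1}^n\max(1,|\alpha_i|)$ and $h(\alpha)=\tfrac{1}{n}\log M(\alpha)$. The plan is to sandwich $M(\alpha)$ between two expressions in $H(\alpha)$ and then take logarithms and divide by $n$. The upper bound on $M(\alpha)$ follows from Landau's inequality $M(\alpha)\le\|P\|_2\le\sqrt{n+1}\,\|P\|_\infty=\sqrt{n+1}\,H(\alpha)$, and the lower bound comes from writing each coefficient of $P$ as an elementary symmetric polynomial in the roots, yielding $|a_k|\le\binom{n}{k}M(\alpha)\le 2^n M(\alpha)$, hence $H(\alpha)\le 2^n M(\alpha)$. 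Taking logs and dividing by $n$ gives exactly the two stated inequalities. The main technical point is Landau's inequality, which I would prove via Jensen's formula on the unit circle (or assume as classical).

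For the product inequality, I would switch to the adelic description of $h$: for any number field $K\ni\alpha$ with set of places $M_K$ (suitably normalised so that the product formula holds), one has
\begin{equation*}
h(\alpha)=\sum_{v\in M_K}\frac{[K_v:\mathbb{Q}_v]}{[K:\mathbb{Q}]}\,\log\max(1,|\alpha|_v).
\end{equation*}
Using the pointwise bound $\max(1,|xy|_v)\le\max(1,|x|_v)\max(1,|y|_v)$ at every place and taking logs term-by-term gives $h(\alpha\beta)\le h(\alpha)+h(\beta)$, and iterating yields the $k$-fold product bound.

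For the sum inequality, the same adelic approach works but one must distinguish archimedean and non-archimedean places. At a non-archimedean place the ultrametric inequality gives $\max(1,|{\sum_i x_i}|_v)\le\prod_i\max(1,|x_i|_v)$, contributing nothing extra. At an archimedean place the triangle inequality only yields $|{\sum_i x_i}|_v\le k\max_i|x_i|_v$, so $\max(1,|{\sum_i x_i}|_v)\le k\prod_i\max(1,|x_i|_v)$. Summing the $\log$ of these bounds weighted by $[K_v:\mathbb{Q}_v]/[K:\mathbb{Q}]$, and using the fact that the archimedean weights sum to $1$, produces exactly the extra $\log k$ term. The main obstacle here is justifying that this adelic formula agrees with the Mahler-measure definition used for the first part; I would invoke this as a standard fact (it follows from the product formula plus a valuation-theoretic expansion of $\log M(\alpha)$), since deriving it from scratch would duplicate material readily available in \cite{Waldschmidt2000}.
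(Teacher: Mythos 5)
The paper does not prove this lemma: it is stated as a recollection of classical facts and referred wholesale to Waldschmidt's book, so there is no internal proof against which to compare. Your argument is a correct, standard textbook derivation of both halves.

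On the details: the two-sided comparison via the Mahler measure $M(\alpha)=|a_n|\prod_i\max(1,|\alpha_i|)$ and $h(\alpha)=\tfrac{1}{n}\log M(\alpha)$ is the usual route. Landau's inequality $M(P)\le\|P\|_2\le\sqrt{n+1}\,\|P\|_\infty$ gives the upper bound after dividing by $n$, and the symmetric-function expansion $|a_{n-k}|=|a_n\,e_k(\alpha_1,\dots,\alpha_n)|\le\binom{n}{k}M(\alpha)\le 2^nM(\alpha)$ (together with $|a_n|\le M(\alpha)$) gives $H(\alpha)\le 2^n M(\alpha)$, hence the lower bound. Note that your chain produces $\le$ rather than the strict $<$ in the statement; for $n\ge 1$ the left side is in fact strict since $\binom{n}{k}<2^n$, and strictness on the right follows because Landau's inequality and the $\ell^2$--$\ell^\infty$ comparison cannot both be tight for a nonconstant irreducible integer polynomial, but you should say a word about this rather than claiming "exactly" the stated inequalities.

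For the product and sum bounds, passing to the adelic (Weil) description $h(\alpha)=\sum_{v}\frac{[K_v:\mathbb{Q}_v]}{[K:\mathbb{Q}]}\log^+|\alpha|_v$ is the right move, precisely because it is stable under field extension and so handles the fact that $\alpha\beta$ and $\alpha+\beta$ live in degrees unrelated to those of $\alpha$ and $\beta$; a direct Mahler-measure attack would have to fight the degree change. Your place-by-place estimates are correct: $\max(1,|xy|_v)\le\max(1,|x|_v)\max(1,|y|_v)$ everywhere, the ultrametric inequality kills the extra factor at finite places, and at archimedean places the triangle inequality contributes a factor $k$ whose log is weighted by $\sum_{v\mid\infty}[K_v:\mathbb{Q}_v]/[K:\mathbb{Q}]=1$, yielding exactly the additive $\log k$. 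The one gap you flag yourself — the equivalence of the Mahler-measure and adelic definitions — is indeed the only nontrivial ingredient you are importing, and it is reasonable to cite it (it is in Waldschmidt's Chapter 3); if you wanted a fully self-contained treatment you would derive it from the product formula and the factorization $\log M(\alpha)=\log|a_n|+\sum_i\log^+|\alpha_i|$, but that would reproduce reference material to no benefit here.
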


This lemma directly implies the following result.\\

\begin{corollary}[arithmetic complexity]
	Given algebraic numbers $\{\alpha_i\}_{i=1,\dots, k}$, such that $h(\alpha_i) \leq h_{max}$ and a function $f$ that computes an arithmetic circuit involving at most $m$ operations of addition, multiplication, subtraction and division,
	
	then $$\displaystyle h\left( f(\alpha_1,\dots,\alpha_k) \right) \leq (m+1)h_{max} + m\log 2.$$
\end{corollary}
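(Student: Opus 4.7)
The plan is to prove the corollary by induction on $m$, the number of arithmetic operations, treating the circuit as a straight-line program whose last gate combines two intermediate subresults. The only non-obvious ingredients are the two identities $h(-\beta)=h(\beta)$ and $h(\beta^{-1})=h(\beta)$ (valid for $\beta\neq 0$), which reduce subtraction and division to addition and multiplication respectively, so that the inequalities $h(\sum_i\alpha_i)\leq \log k+\sum_i h(\alpha_i)$ and $h(\prod_i\alpha_i)\leq \sum_i h(\alpha_i)$ from the preceding logarithmic-height lemma can be used directly. These two identities follow from the observation that if $P(x)=\sum_{j=0}^n a_j x^j$ is the minimal polynomial of $\beta$, then $P(-x)$ and $x^n P(1/x)$ are (up to sign) minimal polynomials of $-\beta$ and $\beta^{-1}$ respectively, both with the same naive height as $P$.

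For the base case $m=0$, the circuit simply returns some $\alpha_i$, and the bound $h(\alpha_i)\leq h_{\max} = (0+1)h_{\max} + 0\cdot\log 2$ holds trivially. For the inductive step, suppose the bound holds for all circuits using at most $m$ operations, and consider a circuit with $m+1$ operations. Write its output as $\beta\star\gamma$, where $\star\in\{+,-,\times,/\}$ and $\beta,\gamma$ are computed by subcircuits of sizes $m_\beta$ and $m_\gamma$ with $m_\beta+m_\gamma = m$. The induction hypothesis gives
\[
h(\beta)\leq (m_\beta+1)h_{\max}+m_\beta\log 2,\qquad h(\gamma)\leq (m_\gamma+1)h_{\max}+m_\gamma\log 2.
\]

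For $\star\in\{+,-\}$, applying the sum inequality with $k=2$ (and $h(-\gamma)=h(\gamma)$ in the subtraction case) gives
\[
h(\beta\star\gamma)\leq \log 2 + h(\beta)+h(\gamma)\leq (m_\beta+m_\gamma+2)h_{\max}+(m_\beta+m_\gamma+1)\log 2 = (m+2)h_{\max}+(m+1)\log 2,
\]
which is exactly the bound $((m+1)+1)h_{\max}+(m+1)\log 2$ required for a circuit with $m+1$ operations. For $\star\in\{\times,/\}$, using $h(\gamma^{-1})=h(\gamma)$ in the division case and the product inequality, we obtain
\[
h(\beta\star\gamma)\leq h(\beta)+h(\gamma)\leq (m+2)h_{\max}+m\log 2,
\]
which is even smaller than the target bound, so the inductive step is complete.

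The argument is routine, and the only potential pitfall is making sure the inductive accounting treats the circuit as a formula (each gate's output used at most once in downstream gates, or equivalently unfolded so that the $m$ in the statement corresponds to the number of edges in the evaluation tree). With that reading, the identities for negation and reciprocal together with the two inequalities already stated in the logarithmic-height lemma suffice, and no further number-theoretic input is needed.
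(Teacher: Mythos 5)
Your proof is correct and fills in an argument the paper leaves implicit (the paper simply asserts that the logarithmic-height lemma ``directly implies'' the corollary, with no further justification). The inductive decomposition of the formula at its final gate, the use of the stated sum and product inequalities, and the reduction of subtraction and division to addition and multiplication via the identities $h(-\beta)=h(\beta)$ and $h(\beta^{-1})=h(\beta)$ are all exactly the right moves; the arithmetic of the induction step checks out. The negation/reciprocal identities are a genuine piece of supplementary input — the lemma as stated in the paper does not cover them — and your minimal-polynomial justification for them is standard and sound (one could equivalently appeal to the product formula and the definition of the Weil height as a sum over places).

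Your closing caveat about formula versus DAG circuits is worth taking seriously, and you are right to flag it: the bound $(m+1)h_{\max}+m\log 2$ can fail for general DAG circuits (repeated squaring computes $\alpha^{2^m}$ in $m$ gates, yet $h(\alpha^{2^m})=2^m h(\alpha)$, which outstrips $(m+1)h_{\max}$ whenever $h(\alpha)$ is near $h_{\max}$). So the statement should indeed be read as applying to arithmetic \emph{formulas}, i.e.\ evaluation trees, with $m$ the tree size; under that reading your induction, which assumes the two subcircuits feeding the last gate are disjoint so that $m_\beta+m_\gamma=m$, is airtight. The paper's subsequent application to Gaussian elimination is somewhat loose on this point, but that is a defect of the paper's usage, not of your proof.
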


%
%
%
%

We aim to give an upper bound on the ratio $C/\eps$. If we can find a maximum 
height $H_{max}$ of any component of a vertex of the polytope in the Jordan basis, 
then, using Lemma~\ref{lem:Liouv}, 
$(H_{max}+1)$ is an upper bound of the (component-wise) distance of any point of the polytope to the origin, and $1/(H_{max}+1)$ is a lower bound. Thus $(H_{max}+1)^2$ is an upper bound for $C/\eps$.

Given a vector $\vec{v}$, let $|\vec{v}|_\infty$ denote the max norm (largest absolute value of any component of $\vec{v}$).

Recall that $J=QAQ^{-1}$ is the Jordan normal form of $A$, so we work in the basis $y = Qx$. Note that each vector $\myvector{v}$ appearing as a column of the
matrix $Q^{-1}$ is a generalised eigenvector.

Let $V$ be the set of vertices of the original polytope, namely vectors of the form $B'^{-1}c'$ for $B'$ invertible square submatrix of $B$.
Then we can select $C$ and $\eps$ such that
$$\frac{C}{\eps} = \frac{\max_{x \in V} |Qx|_\infty}{\min_{x \in V} |Qx|_\infty}.$$

Let us give a bound of $|Qx|_\infty$ for some $x\in \poly_{B,\vec{c}}$. 
For all $i\leq d,$ we have 
 $h((Qx)^{(i)}) = h(\sum_j q_{ij}x^{(j)}) \leq 2d( \max_{i,j\leq d}
\{h(q_{ij}), h(x^{(i)})  \} +\log 2 ).$

$h(x_i)$ is easy to compute.
The elements $x_i$ are solutions to the linear system $B'x = c$.
Using Gaussian elimination, which has less than $3d^3$ arithmetic complexity,
we can compute the entries of $x$.
Since the entries of $B$ and $c$ are at most $b$-bit rationals, they have logarithmic height $b$. 
This implies that $h(x_i) \leq 3d^3 (b+ \log 2)$.

The entries $q_{ij}$ of $Q$ can be computed in $3d^3$  operations from the entries of $Q^{-1}$, which is the matrix of generalised eigenvectors of $A$.
For $q$ and $q^{-1}$ representing the maximum logarithmic height of an entry of $Q$ and 
$Q^{-1}$ respectively, via Gaussian elimination, we have:	
\begin{equation}
q \leq 3d^3 (q^{-1} + \log 2)
\end{equation}
Let $\lambda$ be an eigenvalue of $A$, and let $M_\lambda = A - \lambda I$.
Then the columns of $Q^{-1}$ are vectors $v$, where $v$ is a generalized eigenvector that satisfies $M_\lambda^d v =0$
 for some eigenvalue $\lambda$.
Note that the equation $M_\lambda^d v =0$ is underdetermined, but this does not matter for our purposes, since we just need one valid eigenvector to compute a bound on the height. Note that by the definition of Jordan normal form, $M_\lambda^d v =0$ has at least one non-zero solution.
Again by Gaussian elimination, for $m_\lambda^d$ the maximum logarithmic height of an entry of $M_\lambda^d$, we have:	
\begin{equation}
q^{-1} \leq 3d^3 (m_\lambda^d + \log 2)
\end{equation}
Computing an element of $M_\lambda^d$ from $M_\lambda$ is more complicated and gives height:
\begin{equation}
m_\lambda^d \leq (2d)^d(m_\lambda + \log d)
\end{equation}
Since $M_\lambda = A - \lambda I$, for $a$ the maximum logarithmic height of an entry of $A$,
\begin{equation}
m_\lambda \leq a + h(\lambda) + \log 2 \leq b + h(\lambda) + \log 2
\end{equation}

The height of any eigenvalue $\lambda$ is determined by the coefficients of the characteristic polynomial of $A$. For this, we rely on the following lemma given in~\cite{dumas2007bounds}.

\begin{lemma}[characteristic polynomial bound]
	Let $A \in \Complex^{n \times n}$ , with $n \geq 4$, whose coefficients are bounded in absolute value by
	$B > 1$. The coefficients of the characteristic polynomial $C_A$ of $A$ are denoted by $c_j , j = 0, . . . , n$.
	and $||C_A||_\infty = \max_j\{|c_j |\}$. 
	
	Then $||C_A||_\infty \leq (2nB^2)^{n/2}$. 
	
	Note this is only a factor of $2^{n/2 }$ larger than the Hadamard bound on the determinant, which is the zeroth coefficient.
\end{lemma}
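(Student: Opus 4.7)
The plan is to bound the coefficients of $C_A$ through the eigenvalues $\lambda_1,\ldots,\lambda_n$ of $A$, exploiting the fact that $C_A(x)=\prod_i(x-\lambda_i)$ yields $c_{n-k} = (-1)^k e_k(\lambda_1,\ldots,\lambda_n)$, where $e_k$ denotes the $k$-th elementary symmetric polynomial. By the triangle inequality, $|c_{n-k}| \leq e_k(|\lambda_1|,\ldots,|\lambda_n|)$, so it suffices to bound the elementary symmetric polynomials of the moduli of the eigenvalues.

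I then apply a chain of three classical inequalities to control $e_k(|\lambda_1|,\ldots,|\lambda_n|)$. First, Cauchy--Schwarz applied to the $\binom{n}{k}$-term sum defining $e_k$ gives $e_k(|\lambda|)^2 \leq \binom{n}{k}\,e_k(|\lambda|^2)$. Second, Maclaurin's inequality on the nonnegative reals $|\lambda_i|^2$ gives $e_k(|\lambda|^2) \leq \binom{n}{k}(s/n)^k$, where $s=\sum_i|\lambda_i|^2$. Third, Schur's inequality $s\leq \|A\|_F^2$ combined with the entrywise bound $|a_{ij}|\leq B$ yields $s\leq n^2 B^2$. Chaining these produces the uniform estimate
\begin{equation*}
|c_{n-k}| \;\leq\; \binom{n}{k}\,(nB^2)^{k/2}.
\end{equation*}

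The final, and delicate, step is to show that this bound is dominated by $(2nB^2)^{n/2}$ for every $0\leq k\leq n$. After squaring and using $B\geq 1$ to drop an extra $B^2$ factor, this reduces to the purely combinatorial inequality $\binom{n}{k}^2 \leq 2^n\,n^{n-k}$. The endpoints $k=0,n$ are immediate. To handle the interior, I would locate the worst case by checking where the ratio of consecutive values of $\binom{n}{k}^2/n^{n-k}$ crosses $1$, which happens near $k=n-\sqrt{n}$; at that point the bound $\binom{n}{k}\leq (en/k)^k$ reduces matters to $e^{2\sqrt{n}}\leq 2^n$, which holds once $n$ is large enough (roughly $n\geq 9$), with the finitely many small cases $n=4,\ldots,8$ verified by direct computation. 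The hypothesis $n\geq 4$ in the statement reflects exactly the failure of this uniform estimate for smaller $n$. The upstream inequalities (Cauchy--Schwarz, Maclaurin, Schur) are classical and introduce no slack beyond the factor $2^{n/2}$ already visible in the final bound; the main obstacle is this uniformization across $k$, which requires matching the width of the central region of the binomial coefficients against the decay of $n^{-k}$.
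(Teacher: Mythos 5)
The paper does not prove this lemma: it is imported verbatim from the reference \cite{dumas2007bounds} and used as a black box, so there is no ``paper's own proof'' to compare against. Your argument is, as far as I can tell, a valid alternative proof, and it takes a genuinely different route from the one in Dumas's paper. Dumas's derivation expresses $c_{n-k}$ as a signed sum of $\binom{n}{k}$ principal $k\times k$ minors and applies Hadamard's inequality to each minor, getting the intermediate bound $|c_{n-k}|\le\binom{n}{k}\,k^{k/2}B^{k}$, and then the combinatorics. You instead work through the eigenvalues: the Vieta/triangle-inequality step, Cauchy--Schwarz over the $\binom{n}{k}$ monomials, Maclaurin's inequality, and Schur's bound $\sum_i|\lambda_i|^2\le\|A\|_F^2\le n^2B^2$ combine to give $|c_{n-k}|\le\binom{n}{k}\,n^{k/2}B^{k}$. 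That intermediate bound is slightly weaker (it has $n^{k/2}$ where Dumas has $k^{k/2}$), but after using $B\ge1$ both collapse to the same combinatorial target and to the same final bound. Incidentally, you can shortcut your middle step: Cauchy--Schwarz on $\sum_i|\lambda_i|\le\sqrt{n}\,(\sum_i|\lambda_i|^2)^{1/2}\le n^{3/2}B$, followed by a single Maclaurin application to $e_k(|\lambda_1|,\dots,|\lambda_n|)$, gives $\binom{n}{k}(\sqrt{n}B)^k$ directly without the subset-indexed Cauchy--Schwarz.

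One concrete inaccuracy: you assert that the hypothesis $n\ge 4$ ``reflects exactly the failure of this uniform estimate for smaller $n$.'' This is not so. Your target inequality $\binom{n}{k}^2\le 2^n n^{n-k}$ in fact holds for all $n\ge1$: the Stirling-type estimate you sketch covers $n\ge 9$, and direct enumeration verifies $n=1,\dots,8$ (including $n=2,3$). So your proof does not need $n\ge4$ at all; that restriction is an artefact of the statement as lifted from Dumas (whose theorem contains several bounds, some genuinely requiring $n\ge4$). The point is cosmetic rather than structural, but it should be corrected so the reader does not infer a failure mode that does not exist in your argument.
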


Since the entries of $A$ are $b$-bit rationals, we can use $2^b$ as the bound $B$ of this lemma. 
Thus the coefficients of the characteristic polynomial over $\Rationals$ are bounded by $(2d2^{2b})^{d/2}$.
Multiplying by $2^{bd}$ (the largest possible denominator) ensures an integer polynomial,
we obtain the following bound on $h(\lambda)$:
\begin{equation}
h(\lambda) \leq \log_2  [(2d2^{2b})^{d/2}2^{bd}] \leq 2bd + \frac{d}{2} \log (2d)\leq 3bd^2.
\end{equation}

Gathering (1), (2), (3), (4) and (5), we obtain:
%
%
$$q \leq 80bd^{d+7}2^d.$$
Thus, for all $i\leq d,$ we have 
$$h((Qx)^{(i)})\leq 2d( \max\{h(q_{ij}), h(x_i)  \}+\log 2 ) \leq 160bd^{d+8}2^d.$$
Using the log height lemma, we get the bound
$$H((Qx)^{(i)}) \leq 2^n \exp(n h((Qx)^{(i)})),$$ 
where $n$ is the degree of $(Qx)^{(i)}$ as an algebraic number.

%

Since $(Qx)^{(i)}$ is obtained by performing arithmetic operations with all the eigenvalues of $A$, it lies in the splitting field of the characteristic polynomial of $A$, which may have degree $d!$. 
Using Lemma~\ref{lem:Liouv}, this gives us a quantitative bound on $C/\eps$, which is
$$C/\eps \leq (H((Qx)^{(i)}) +1)^2 \leq 4H((Qx)^{(i)})^2 \leq 
2^{2d!+2} \exp\left ( 320 b d^{d+8} 2^d  (d!)\right ) \leq 
\exp\left ( 640 b d^{3d+8}\right ) .$$

\subsection{Proofs of Section~\ref{sec:escaping-time}}

\noindent {\bf{Case $\lambda=0$.}}

\begin{proof}
	Given the set of equations $$x_j (t) = \sum_{i=j}^{k} x_{i}\frac{t^{i-j}}{(i-j)!},$$ we want a $T$ such that there exists $j$ such that $|x_j(T)| > C$.
	
	By construction of $\eps$, there exists $j_1 \geq 2$ such that $|x_{j_1}| > \eps$.
	Consider the component $$x_{j_1 -1} (t) = \sum_{i={j_1 -1}}^{k} x_{i}\frac{t^{i-j_1 +1}}{(i-j_1 +1)!}.$$
	We set $T_{j_1} = kC/\eps$.
	Observe that $$|x_{j_1 -1} (T_{j_1})| \geq |x_{j_1} kC/\eps | - |x_{j_1 -1}| - \sum_{i={j_1 +1}}^{k} \left |x_{i}\frac{T_{j_1}^{i-j_1 +1}}{(i-j_1 +1)!}\right |$$
	Since the first term is larger than $kC$ and the second term is smaller than $C$, the only way $|x_{j_1 -1} (T_{j_1})|$ could be less than $C$ (and thus not escape the polytope) is if one of the later terms is larger than $C$. 
	Let $j_2$ be the highest index such that $\left |x_{j_2}\frac{T_{j_1}^{{j_2}-j_1 +1}}{({j_2}-j_1 +1)!}\right |\geq C.$ Note that $j_2 > j_1$.
	We now have a lower bound on a higher index coefficient, namely
	$|x_{j_2}|\frac{T_{j_1}^{j_2-j_1 +1}}{(j_2-j_1 +1)!} > C$.
	We now repeat the process with the component $$x_{j_2 -1} (t) = x_{j_2 -1}+ x_{j_2}t + \sum_{i={j_2 +1}}^{k} x_{i}\frac{t^{i-j_2 +1}}{(i-j_2 +1)!}$$
	We have $|x_{j_2}|\frac{T_{j_1}^{j_2-j_1 +1}}{(j_2-j_1 +1)!} > C,$ thus setting $T_{j_2} > k\frac{T_{j_1}^{j_2-j_1 +1}}{(j_2-j_1 +1)!}$ ensures that $|x_{j_2}T_{j_2}| > kC$.
	
	
	Continuing this process, we will either find a component that escapes the polytope
	or move on to a component with higher index, which can happen at most $k-1$ times, because we have the constraints $j_1 \geq 2, \forall i, j_i \leq k, $ and $j_i > j_{i-1}$.
	This gives us a recursive definition for the bound, which is $$T_{j_n} > k\frac{T_{j_{n-1}}^{j_n-j_{n-1} +1}}{(j_n-j_{n-1} +1)!} $$
	
	We wish to find an upper bound on $T = T_N$, the time by which we are guaranteed that 
	at least one component escapes, subject to the constraints $j_1 \geq 2, j_N \leq k, $ and $j_n > j_{n-1}$.
	We can solve the recursive inequality by weakening it (since we only need an upper bound on $T$) to $$T_{j_n} > (kT_{j_{n-1}})^{j_n-j_{n-1} +1}.$$
	Note that pulling the constant $k$ into the exponentiated part is valid because $j_n-j_{n-1} +1	>2$ always.
	Setting $S_{j_n} = kT_{j_n}$, we get
	$S_{j_n} > S_{j_{n-1}}^{j_n-j_{n-1} +1}, S_{j_1}= k^2C/\eps,$	
	which reduces to $$S_{j_N} > \left (\frac{k^2C}{\eps}\right )^{\prod_{i=2}^{N} (j_i -j_{i-1} + 1)}$$
	The term $\prod_{i=2}^{N} (j_i -j_{i-1} + 1)$ is maximised when for all $i$, $j_i = j_{i-1} +1$, thus in the worst case we have 
	$$S_{j_N} > \left (\frac{k^2C}{\eps}\right )^{2^{k-1}}$$
%
	Thus we have a bound for a zero-eigenvalue component to escape, which is 
	$$\boxed{T \leq \frac{1}{k} \left (\frac{k^2C}{\eps}\right )^{2^{k-1}} .}$$
\end{proof}

\subparagraph{Case $\lambda>0$.}

\begin{proof}
	The proof is very similar in structure to the zero eigenvalue case, though the presence of an exponential factor gives us a much better bound.

	By construction of $\eps$, there exists $j_1 \geq 2$ such that $|x_{j_1}| > \eps$.
	Consider the component $$x_{j_1} (t) = \sum_{i={x_1}}^{k} \exp(\lambda t)x_{i}\frac{t^{i-j_1 }}{(i-j_1)!}.$$
	Set $T_{j_1} = \frac{1}{\lambda}\log(kC/\eps)$ and observe that $$|x_{j_1} (T_{j_1})| \geq \exp(\lambda \frac{1}{\lambda}\log(kC/\eps))|x_{j_1}| - \sum_{i={j_1 +1}}^{k}  \exp(\lambda T_{j_1})|x_{i}|\frac{T_{j_1}^{i-j_1}}{(i-j_1)!}.$$
	Since the first term is larger than $kC$, the only way $|x_{j_1} (T_{j_1})|$ can be less than $C$ (and thus not escape the polytope) is if one of the later terms is larger than $C$. Let $j_2$ be the highest index such that 
	$\exp(\lambda T_{j_1})|x_{j_2}|\frac{T_{j_1}^{{j_2}-j_1}}{({j_2}-j_1)!}\geq C.$ Note that $j_2 > j_1$.
	We now have a lower bound on a higher index coefficient, namely
	$|x_{j_2}|\frac{T_{j_1}^{j_2-j_1}}{(j_2-j_1)!}\exp(\lambda T_{j_1}) > C.$
	Now we repeat the process with the component $$x_{j_2} (t) = \exp(\lambda t)x_{j_2}+ \sum_{i={j_2 +1}}^{k} \exp(\lambda t)x_{i}\frac{t^{i-j_2 }}{(i-j_2)!}$$
	We want $|x_{j_2}| \exp(\lambda T_{j_2} ) > kC$, so it is enough to set 
	$$\exp(\lambda T_{j_2} )\frac{(j_2-j_1)!}{T_{j_1}^{j_2-j_1}}\exp(-\lambda T_{j_1}) > k .$$
	Ignoring the factorial term for simplicity, we get the constraint $$T_{j_2} > T_{j_1} + \frac{j_2-j_1}{\lambda}\log(T_{j_1}) + \frac{1}{\lambda}\log k.$$
	
	Continuing the process, we will either find a component that escapes the polytope
	or move on to a component with higher index, which can happen at most $k-1$ times, because we have the constraints $j_1 \geq 2, \forall i, j_i \leq k, $ and $j_i > j_{i-1}$.
	This process gives us a recursive definition for the bound, which is $$T_{j_n} > T_{j_{n-1}} + \frac{j_n-j_{n-1}}{\lambda}\log(T_{j_{n-1}}) + \frac{1}{\lambda}\log k.$$
	
%
%
	We wish to find an upper bound on $T = T_N$, the time by which we are guaranteed that at least one component escapes, subject to the constraints $j_1 \geq 1, j_N \leq k, $ and $j_n > j_{n-1}$.
	We can solve the recursive inequality by weakening it (since we only need an upper bound on $T$), observing that $T_{j_{n-1}} >\frac{k}{\lambda}\log(T_{j_{n-1}}) \Rightarrow T_{j_{n-1}} > \frac{j_n-j_{n-1}}{\lambda}\log(T_{j_{n-1}})$
	and the lefthand side of this implication holds if $T_{j_{n-1}} > \frac{4k}{\lambda}\log \left (\frac{2k}{\lambda}\right )$ (using Lemma~\ref{lem:techi}).
	Thus if we ensure that $T_{j_{1}} > \frac{4k}{\lambda}\log \left (\frac{2k}{\lambda}\right )$, we can work with the much simpler recurrence 
	$$T_{j_n} > 2T_{j_{n-1}} +  \frac{1}{\lambda}\log k ,$$
	which is easily solved to get 
	$$T_N > 2^{N-1}T_{j_1} - \frac{1}{\lambda}\log k.$$
	
	As $N \leq k$ and assuming $\frac{1}{\lambda}\log(kC/\eps)> \frac{4k}{\lambda}\log \left (\frac{2k}{\lambda}\right )$ (which is valid as the order of magnitude of the first one is greater than the second one), 
	we have a bound for a positive-eigenvalue component to escape, which is 
	$$\boxed{T \leq \frac{2^{k-1}}{\lambda}\log\left(\frac{kC} \eps\right) .}$$ 
\end{proof}

\subsection{Analysis of the complexity of $T_c+T_r$}

The escape time is bounded by 
\begin{align*}
T_c+T_r \leq & \sum_{j=1}^r \nu(\eta_j + i\theta_j)\frac{\pi}{\theta_j}  +\\&
2 \max_{\lambda} \left \{ \frac{4\nu(\lambda)}{|\lambda|}\log\frac{2\nu(\lambda)}{|\lambda|} + \frac{2}{|\lambda|}\log \frac{\nu(\lambda)C}{\eps}, \;
 \frac{2^{\nu(\lambda)-1}}{\lambda}\log\frac{\nu(\lambda)C}\eps, \; \frac 1  {\nu(0)}\left (\frac{\nu(0)^2C}{\eps}\right )^{2^{\nu(0)-1}} \right \}.
\end{align*}

In terms of magnitude, the worst case occurs for a zero eigenvalue, in which case 
$ T_r \leq \frac 2  {\nu(0)}(\nu(0)^2C/\eps)^{2^\nu(0)}\leq \frac 2  {d}(d^2C/\eps)^{2^d}$.

We need to bound $\frac C \epsilon, \frac 1 \theta,$ and $\frac 1 \lambda$.
The bound on $\frac C \epsilon$ was given in Lemma~\ref{lem:hypsize}:
$$\exp\left ( 640. b. d^{3d+8}\right ).$$

For $\frac{1}{\theta}$ and $\frac{1}{\lambda}$, 
we can use the Mignotte root separation bound :
\begin{prop} [Mignotte bound]
	Let $f\in\mathbb{Z}[x]$. If $\alpha_{1}$ and $\alpha_{2}$ are distinct roots of $f$, then
	\begin{align*}
	\lvert \alpha_{1}-\alpha_{2} \rvert > \frac{\sqrt{6}}{d^{(d+1)/2}H^{d-1}}
	\end{align*}
	where $d$ and $H$ are respectively the degree and height (maximum
	absolute value of the coefficients) of $f$.
\end{prop}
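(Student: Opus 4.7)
The plan is to derive the separation bound from the fact that the discriminant of an integer polynomial with only simple roots is a nonzero integer, and hence at least $1$ in absolute value. First, I would reduce to the case where $f$ is squarefree: since $\alpha_1$ and $\alpha_2$ are distinct, they survive as distinct roots of the squarefree part $\tilde f$ of $f$, and Mignotte's factor bound controls the height of $\tilde f$ in terms of $H$ and $d$. Assuming now that $f$ has simple roots $\alpha_1,\ldots,\alpha_d$ and leading coefficient $a_d\in\mathbb{Z}\setminus\{0\}$, the discriminant
$$D(f) \;=\; a_d^{2d-2}\prod_{i<j}(\alpha_i-\alpha_j)^2$$
lies in $\mathbb{Z}\setminus\{0\}$, so $|D(f)|\geq 1$. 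Isolating the pair $(\alpha_1,\alpha_2)$ and rearranging yields
$$|\alpha_1-\alpha_2|^2 \;\geq\; \frac{1}{|a_d|^{2d-2}\displaystyle\prod_{\{i,j\}\neq\{1,2\}}|\alpha_i-\alpha_j|^2}.$$

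Next I would bound the remaining root differences from above via the Vandermonde/Hadamard route. Applying Hadamard's inequality to the Vandermonde matrix of the roots yields the classical Mahler estimate
$$|a_d|^{d-1}\prod_{i<j}|\alpha_i-\alpha_j| \;\leq\; d^{d/2}\,M(f)^{d-1},$$
where $M(f)=|a_d|\prod_i\max(1,|\alpha_i|)$ is the Mahler measure of $f$. Combining this with Landau's inequality $M(f)\leq\|f\|_2\leq\sqrt{d+1}\,H$, and handling the specially isolated pair $\{1,2\}$ by the elementary estimate $|\alpha_i-\alpha_j|\leq 2\max(1,|\alpha_i|)\max(1,|\alpha_j|)$ (so as not to double-count it), one substitutes back into the discriminant inequality and collects the exponents of $d$ and $H$ to reach a bound of the form
$$|\alpha_1-\alpha_2| \;\geq\; \frac{c}{d^{(d+1)/2}\,H^{d-1}}$$
for some explicit absolute constant $c$.

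The main obstacle will be pinning down the specific constant $c=\sqrt{6}$. The qualitative shape $1/(d^{O(d)}H^{d-1})$ follows cleanly from the discriminant lower bound paired with the Vandermonde/Hadamard upper bound, but extracting the precise numerical constant requires careful bookkeeping of the various factors of $\sqrt{d+1}$, $d^{d/2}$, and $2$ that appear when one isolates a single pair of roots and estimates the remaining $\binom{d}{2}-1$ differences by the Mahler/Landau chain. The reduction to the squarefree case is a mild technicality that does not alter the dominant exponents in $d$ or $H$.
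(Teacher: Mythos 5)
The paper does not prove this proposition---it is stated and used as a black-box citation to Mignotte's classical root-separation bound---so there is no in-house proof to compare against. Evaluated on its own terms, your outline follows the standard architecture (discriminant $\geq 1$, Vandermonde/Hadamard, Mahler measure, Landau), which is indeed how the bound is proved in the literature, but two of the steps as you describe them would not produce the stated inequality.

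First, the squarefree reduction is not the ``mild technicality'' you claim. Mignotte's factor bound only controls $\|\tilde f\|_\infty$ up to a factor of roughly $2^{\deg\tilde f}\|f\|_2$, and once this is raised to the power $d-1$ inside the separation bound for $\tilde f$ it introduces a factor on the order of $2^{d(d-1)}$ in the denominator. That is a degradation in the exponent of $d$, not merely in the constant $\sqrt 6$. The correct repair is to carry the Mahler measure, not the height, through the reduction: the Mahler form of the squarefree bound reads $\mathrm{sep}(\tilde f) > \sqrt{3\,|\mathrm{disc}(\tilde f)|}\,(\deg\tilde f)^{-(\deg\tilde f+2)/2}M(\tilde f)^{1-\deg\tilde f}$, and since Mahler measure is multiplicative over the integer factorisation of $f$ and every nonconstant integer factor has measure at least $1$, one has $M(\tilde f)\le M(f)$; only then does Landau's inequality $M(f)\le\|f\|_2\le\sqrt{d+1}\,H$ get applied to the original polynomial, with no exponential loss. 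Second, the step in which you ``handle the specially isolated pair by the elementary estimate $|\alpha_i-\alpha_j|\le 2\max(1,|\alpha_i|)\max(1,|\alpha_j|)$'' does not, as described, yield an upper bound on $\prod_{\{i,j\}\neq\{1,2\}}|\alpha_i-\alpha_j|$: dividing the full Hadamard product bound by an upper bound on $|\alpha_1-\alpha_2|$ goes the wrong way, and applying the elementary estimate to each of the $\binom{d}{2}-1$ remaining pairs overcounts factors of $2$. The classical argument instead subtracts the $\alpha_1$-column from the $\alpha_2$-column in the Vandermonde matrix, factors $\alpha_2-\alpha_1$ out of the modified column, and applies Hadamard to what remains; this column manipulation is precisely what generates the specific constant and the exact power of $d$, so it cannot be replaced by a pointwise estimate if you want the numerical form of the bound.
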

We apply this result on the polynomial $xP(x)$ where $P$ is the characteristic polynomial
of $A$. This gives a distance between the root $0$ and $\lambda$. It also gives a bound for $\theta$ as it is obtained as the difference between two conjugate roots of $A$.
Using the bounds on the height of $P$ computed for Lemma~\ref{lem:hypsize}, we get
$$\frac{1}{\theta}, \frac{1}{\lambda} \leq \frac 1 {\sqrt{6}} \left ((2d)^{d/2}\cdot 2^{2bd}\right )^{d-1} d^{(d+1)/2} \leq 4^{3bd^3}.$$

Thus, $T_c$ is very small compared to the bound obtained for $T_r$ in the zero eigenvalue case. Therefore, the escape time is bounded by
$$T \leq 
\frac 4  {d}(d^2\exp\left ( 640 b d^{3d+8}\right ))^{2^d}\leq 
4 \exp\left ( 640 b d^{4d+10}\right )= e^{bd^{(d)}}.$$

\section{Proofs of Theorem~\ref{th:bounddisc}}

Given a $d$-dimensional instance $(A,B,\vec{c})$ of the DPEP, the proof is realised in two steps. 
First we deal with the negative and zero eigenvalues, showing how they can be ignored
in a similar way as we did for the continuous case in Subsection~\ref{sec:complex-eiger}
(\emph{i.e.} by showing that the axe of the symmetries created by the negative eigenvalues is in the polytope and if the trajectories starting on the axis escape, then every trajectory escapes in a small additional number of steps).
Then, we will reduce the problem, with only non-negative eigenvalues, to the continuous 
case using the matrix logarithm.


Let us start with the negative eigenvalues.
We state a lemma that is essentially a discrete version of Lemma~\ref{lem:generzero}.
\begin{lemma}[Zero in convex hull (discrete case)]
	Suppose there are $r$ negative real eigenvalues, $\lambda_1,\dots, \lambda_r$ with $\lambda_j$ of multiplicity $\nu(j)$. Let $N = \sum_{j=1}^r \nu(j)$.
	
	Given a vector $v$ where every component is equal to 0 outside the negative real eigenspaces, the convex hull of the set $\{ v, Av, A^2v , \dots, A^Nv\}$ contains the origin.
\end{lemma}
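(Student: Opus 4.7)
The plan is to give a short direct algebraic argument, completely bypassing any iterative cancellation akin to the proof of Lemma~\ref{lem:generzero}. The starting observation is that since $v$ lies in $\bigoplus_{j=1}^{r}\mathcal{V}_{\lambda_j}$, which is an $A$-invariant subspace, and since the minimal polynomial of $A$ restricted to that subspace is exactly $m(x)=\prod_{j=1}^{r}(x-\lambda_j)^{\nu(j)}$, we have $m(A)\,v=0$. Note that $\deg m = N$.

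The key idea is to exploit the fact that every $\lambda_j$ is \emph{negative}. Writing $x-\lambda_j = x + |\lambda_j|$, each factor of $m$ is a polynomial in $x$ whose coefficients are all non-negative. Since a product of polynomials with non-negative coefficients has non-negative coefficients, expanding $m(x)=\sum_{k=0}^{N}c_k\, x^{k}$ yields $c_k \geq 0$ for every $k$; moreover $c_N=1$ because $m$ is monic, so $S:=\sum_{k=0}^{N}c_k>0$.

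Setting $\alpha_k = c_k/S$, the identity $m(A)\,v=0$ rearranges to $\sum_{k=0}^{N} \alpha_k A^k v = \vec{0}$, where $\alpha_k \geq 0$ and $\sum_{k=0}^{N} \alpha_k = 1$. This is precisely the statement that $\vec{0}$ is a convex combination of the points $v, Av, A^{2}v, \ldots, A^{N}v$, proving the lemma.

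I do not anticipate any genuine obstacle: once the non-negativity observation is made, the entire argument is a one-line manipulation of the minimal polynomial exploiting the sign of the eigenvalues. The only mild subtlety is to invoke the annihilation $m(A)v=0$ on the correct $A$-invariant subspace, which follows from $v$ having vanishing components outside $\bigoplus_{j}\mathcal{V}_{\lambda_j}$; this is exactly the hypothesis of the lemma.
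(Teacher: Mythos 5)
Your proof is correct, and it is a genuinely different and considerably slicker argument than the one in the paper. The paper transports the continuous technique: it rewrites $\lambda_j^n = e^{(\log|\lambda_j| + i\pi)n}$, treating each negative real eigenvalue as a complex number with phase $\pi$, and then runs the iterative ``travel to the opposite-phase point and take a convex combination to cancel one coordinate'' scheme from Lemma~\ref{lem:generzero}, one unit step per coordinate, for a total of $N$ steps. You instead observe that $v$ lies in the $A$-invariant subspace $\bigoplus_j \mathcal{V}_{\lambda_j}$, on which $m(A)=\prod_j (A-\lambda_j I)^{\nu(j)}$ vanishes, and that because each $\lambda_j<0$ every factor $x-\lambda_j=x+|\lambda_j|$ has non-negative coefficients, hence so does $m$. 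Dividing $m(A)v=\vec 0$ by $S=\sum_k c_k = m(1)>0$ exhibits $\vec 0$ as an explicit convex combination of $v, Av, \dots, A^N v$ in one line. One small wording nit: $m$ as you define it need not be the \emph{minimal} polynomial of $A$ restricted to $\bigoplus_j\mathcal V_{\lambda_j}$ if $\nu(j)$ is read as algebraic multiplicity rather than index; but your argument only needs $m(A)v=\vec 0$, which holds in either reading since $\nu(j)$ is at least the index. Your argument buys brevity and transparency (it names the convex weights exactly as the normalised coefficients of $m$), while the paper's approach has the virtue of being uniform with the continuous Lemma~\ref{lem:generzero} and would still apply if one also wanted to cancel genuinely complex eigenvalue pairs, where a quadratic factor $(x-\lambda)(x-\bar\lambda)=x^2-2\,\mathrm{Re}(\lambda)\,x+|\lambda|^2$ can have a negative middle coefficient and your sign argument would break down.
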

\begin{proof}
	The point $A^nv \in \R^N$ (considering only the negative real eigenspace, since other coordinates are zero) can be written as $$\vec{x}_n = (p_{1,0}(n)\lambda_1^n,\dots, p_{1,\nu(1)-1}(n)\lambda_1^n  \, , \, p_{2,0}(n)\lambda_2^n ,\dots,p_{r,\nu(r)-1}(n)\lambda_r^n  )^T,$$ where $p_{j,k}$ is a polynomial corresponding to the $j$'th eigenvalue of degree $k$, as can be seen from the Jordan normal form.
	
	Now observe that $\lambda_j^n = (-|\lambda_j|)^n = e^{( \log |\lambda_j| + i \pi )n}.$
	Thus we can apply the method of Lemma~\ref{lem:generzero}, with each step being done in one time unit, giving a bound $N \leq \sum_{j=1}^r \nu(j)$ on the number of 
	steps to contain the origin.
\end{proof}

We observe that this bound on the number of steps is always smaller or equal to the dimension of the instance.
For the eigenspaces associated to zero eigenvalues, note that the corresponding
Jordan blocks are nilpotent, so in at most $d$ iterations,
the component in this eigenspace goes to zero.


Now, let us assume the components associated to negative or zero eigenvalues are null.
Let $J$ be the Jordan Normal Form of $A$.
Let $\mathcal{V}_-$ be the eigenspace of negative and zero eigenvalues, and let $\mathcal{V}_c$ be the eigenspace of all other eigenvalues.
Let $J' $ be the submatrix of $J$ restricted to $\mathcal{V}_c$. 
Similarly, define $\mathcal{P'}$ to be the polytope obtained by projecting 
$\poly_{BQ^{-1},\vec{c}}$ onto this subspace. 
We define $G' = \Log J'$. As there are no negative and zero eigenvalue, this operation is well defined. $G'$ translates $J'$ into the continuous setting, in particular, 
for $n \in \Naturals$, we have (for $\vec{y}_n$ the sequence $\vec{y}_n$ in the Jordan basis and restricted to the real eigenspaces)
$\vec{y}_n = (J')^n \vec{y}_0 = \exp(G')^n \vec{y}_0
=\exp(G' n)\vec{y}_0 = \vec{y}(n).$
Let $T_c$ be the escape time bound given by Theorem~\ref{th:globfin} for the
instance defined by the polytope $\mathcal{P'}$ and the function $x\mapsto G'x$.
In a similar fashion to the analysis of the continuous case, we have that the number of steps needed to escape the polytope is bounded by
$$N = \lceil T_c \rceil + d.$$


This value is dominated by the term $T_c$, for which the worst case is due to the 
Jordan blocks with eigenvalue 1, since this corresponds to the zero eigenvalue case 
after we take the logarithm of the matrix. 
Thus we obtain that $N=e^{bd^{O(d)}}$, as in the continuous case.

\end{document}